\newcounter{fig}
\newcommand{\mybic}{\author{Gianluca Cassese}
                     \address{Universit\`{a} Milano Bicocca}
                     \email{gianluca.cassese@unimib.it}
                     \curraddr{Department of Economics, Statistics and Management 
                                  Building U7, Room 2097, via Bicocca 
                                  degli Arcimboldi 8, 20126 Milano - Italy}}
\newtheorem{theorem}{Theorem}
\theoremstyle{plain}
\newtheorem{corollary}{Corollary}
\newtheorem{lemma}{Lemma}
\newcommand{\tto}{\longrightarrow}
\newcommand{\Prob}{\mathbb{P}}
\newcommand{\Sim}{\mathscr{S}} 
\DeclareMathOperator*{\LIM}{LIM} 
\DeclareMathOperator*{\co}{co}
\newcommand{\A}{\mathscr{A}} 
\newcommand{\R}{\mathbb{R}} 
\newcommand{\N}{\mathbb{N}}
\newcommand{\Bor}{\mathscr{B}}
\newcommand{\abs}[1]{\vert #1\vert} 
\newcommand{\babs}[1]{\big\vert #1\big\vert}
\newcommand{\dabs}[1]{\left\vert #1\right\vert} 
\newcommand{\net}[3]{\langle #1_{#2}\rangle_{#2\in #3} } 
\newcommand{\neta}[1]{\net{#1}{\alpha}{\mathfrak A}} 
\newcommand{\nnet}[3]{\langle #1\rangle_{#2\in #3} } 
\newcommand{\nneta}[1]{\nnet{#1}{\alpha}{\mathfrak A}} 
\newcommand{\seq}[2]{\net{#1}{#2}{\mathbb{N}}} 
\newcommand{\sseq}[2]{\nnet{#1}{#2}{\mathbb{N}}} 
\newcommand{\seqn}[1]{\seq{#1}{n}} 
\newcommand{\sseqn}[1]{\sseq{#1}{n}}
\newcommand{\norm}[1]{\Vert #1\Vert} 
\newcommand{\bnorm}[1]{\big\Vert #1\big\Vert} 
\newcommand{\bgnorm}[1]{\bigg\Vert #1\bigg\Vert}
\newcommand{\condP}[2]{P(#1\vert #2)}
\newcommand{\set}[1]{\mathbf{1}_{#1}}
\newcommand{\cl}[2][ ]{\overline{#2}^{\ #1}}
\newcommand{\cco}[1][ ]{\overline\co^{#1}}
\newcommand{\G}[1]{\Gamma(#1_1,#1_2,\ldots)}
\newcommand{\Tkl}[1]{#1\wedge 2^k\lambda}
\newcommand{\Tl}[2]{#2\wedge#1\lambda}
\newcommand{\PP}{(\textit{\textbf P})\,}
\begin{document}

\title[Koml\'os Theorem]
{A Version of Koml\'os Theorem for Additive Set Functions}
\mybic
\date
\today
\subjclass[2010]{60F05, 60B10, 46B42.} 

\keywords{Koml\'os Lemma, Subsequence principle, Strong law of large 
numbers, Weak compactness.}

\maketitle

\begin{abstract}
We provide a version of the celebrated theorem of Koml\'os in which, 
rather then random quantities, a sequence of finitely additive measures 
is considered. We obtain a form of the subsequence principle and some 
applications.
\end{abstract}

{\SMALL The original publication is available at sankhya.isical.ac.in,
DOI: http://dx.doi.org/10.1007/s13171-015-0080-9.
}

\section{Introduction}

In 1967 Koml\'os \cite{komlos} proved the following subsequence principle: a 
norm bounded sequence $\seqn f$ in $L^1(P)$, with $P$ a probability law, 
admits a subsequence $\seqn g$ and $g\in L^1(P)$ such that, for any further 
subsequence $\seqn h$,
\begin{equation}
\label{intro}
P\left(\lim_{N\to\infty}\frac{h_1+h_2+\ldots+h_N}{N}= g\right)=1
\end{equation}
The proof uses a truncation technique, weak compactness in $L^2(P)$ 
and martingale convergence. In this paper we prove a form of this result
in which the random quantities $f_n$ are replaced by additive set functions
and countable additivity is not assumed.

The original work of Koml\'os has originated a number of subsequent 
contributions extending its validity in several directions. Chatterji \cite{chatterji} 
replaced $L^1$ with $L^p$ for $0<p<2$; Schwartz \cite{schwartz} gave 
two different proofs still using truncation and weak 
compactness; Berkes \cite{berkes} showed that the subsequence may be 
selected so that each permutation of its elements still satisfies \eqref{intro}. 
Other proofs of this same result (or some extension of it) were subsequently 
given also by Balder \cite{balder} and Trautner \cite{trautner}. Weizs\"acker 
\cite{weizsacker} explored the possibility of dropping the boundedness property 
while Lennard \cite{lennard} showed that this property is necessary for a convex 
subset of $L^1$ to have each sequence satisfying the subsequence principle.
Other papers considered cases in which the functions $f_n$ take their values in
some vector space other than $\R$. These include Balder \cite{balder 89} and 
Guessous \cite{guessous}. Balder and Hess \cite{balder hess} considered 
multifunctions with values in Banach spaces with the Radon Nikodym property. Day 
and Lennard \cite{day lennard} and Jim\'enez Fern\'andez et al. \cite{jimenez} 
proved equivalence with the Fatou property. Eventually, Halevy and Bhaskara Rao
\cite{halevy rao} considered the case in which the probability measure $P$ is 
replaced by an independent strategy, a finitely additive set function of a special
type introduced by Dubins and Savage \cite{dubins savage}.

Even disregarding the obvious interest in the strong law of large numbers, it is 
often very useful in applied problems to extract from a given sequence an a.s. 
converging subsequence. Koml\'os theorem implies that this may be done upon 
replacing the original sequence with one formed by convex combinations of 
elements of arbitrarily large index. In this somehow different formulation, Koml\'os 
theorem has been exploited extensively, e.g. by Burkholder \cite{burkholder} to 
give a simple proof of Kingman ergodic theorem, or by Cvitani\'c and Karatzas 
\cite{cvitanic karatzas} for application to statistics. 

However, consider replacing each element $f_n$ in the original sequence with 
 the indicator $\set B(f_n)$ of the event $f_n\in B$ and to construct the resulting 
empirical distribution:
\begin{equation}
\label{empirical}
F_N(B)=\frac{\sum_{n=1}^N\set B(f_n)}{N}
\qquad
B\in\Bor
\end{equation}
In order to apply Koml\'os to the time honored problem of the convergence of 
the empirical distribution, one should be able to select a subsequence so as to 
obtain convergence for all $B$ in $\Bor$. But this may hardly be possible if $\Bor$ 
is not countably generated, a situation quite common in the theory of stochastic 
processes when $\Bor$ is the Borel $\sigma$ algebra of some non separable 
metric space. This difficulty is emphasized if one requires a more sophisticated
notion of convergence than setwise convergence.

In a highly influential paper, Blackwell and Dubins \cite{blackwell dubins} modeled 
the evolution of probability in response to some observable phenomenon as a 
sequence of regular posterior probabilities:
\begin{equation}
\label{posterior}
F_n(B)=\condP{B}{f_1,\ldots,f_n}
\qquad
B\in\Bor
\end{equation}
The merging of opinions obtains whenever the posteriors originated by two 
countably additive probability measures converge to $0$ in total variation for 
all histories $f_1,f_2,\ldots$ save possibly on a set of measure zero. In this
formulation we are confronted with set functions taking values in a vector
space of measurable functions, a setting in which the subsequence principle
in its original formulation appears even more troublesome. 

The problem just considered provides a good case in point of the advantage 
or the need of working with finite additivity. On the one hand one may wish 
to define each $F_n$ in \eqref{posterior} on a larger class than $\Bor$, e.g. 
the class of all subsets of the underlying sample space $X$. Classical conditional 
expectation may then be extended fairly easily to this larger class (although 
not in a unique way), e.g. via \cite[Theorem 1]{JOTP}. On the other hand, 
one may consider this same problem in more general cases than with $X$ 
a complete, separable metric space so that the existence of a regular conditional 
probability may not be guaranteed. In either case, by virtue of the lifting theorem,
posterior probability may be defined as a vector valued, additive set function 
but the countable additivity property has to be abandoned.

The main result of this paper establishes that if a sequence of (finitely additive) 
probabilities is transformed by taking convex combinations and restrictions, then 
norm convergence obtains. The proof, although rather different from those given 
in the cited references, retains from the original work of Koml\'os, the idea of 
achieving weak compactness via truncations. In section \ref{sec banach} we prove 
the basic version of our result, valid for general Banach lattices with order continuous 
norm (but not assuming the Radon Nikodym property), as the key argument in our 
proof just uses lattice properties. With such degree of generality this result, perhaps 
of its own interest, appears to be rather weak. In section \ref{sec scalar} we 
specialize to the space $ba(\A)$ of scalar valued, additive, bounded set functions 
for which the convergence statement is significantly stronger. We provide some 
applications, such as a finitely additive version of the strong law of large numbers, 
Corollary \ref{cor slln}, and explore the implications of assuming independence, 
Corollary \ref{cor independent} and of dropping norm boundedness, Theorem 
\ref{th komlos unbounded}. In Theorem \ref{th komlos vector} we prove a version 
valid for a special space $ba_0(\A,X)$ of set functions taking values in some 
vector lattice $X$.

We refer throughout to a given, non empty set $\Omega$ and an algebra $\A$ 
of its subsets. $\Sim(\A)$  and $ba(\A)$ designate the families of simple, $\A$
measurable functions $f:\Omega\to\R$ and, as in \cite{bible}, the family of real 
valued, additive set functions on $\A$ which are bounded with respect to the total 
variation norm, $\norm\lambda=\abs\lambda(\Omega)$, respectively. When 
$\mu\in ba(\A)$ and $B\in\A$ we define $\mu_B\in ba(\A)$ implicitly by letting 
$\mu_B(A)=\mu(B\cap A)$ for  each $A\in\A$. $\Prob(\A)$ denotes the family 
of finitely additive probabilities on $\A$. Countable additivity is never assumed, 
unless otherwise explicitly stated. If $\lambda\in ba(\A)_+$, we say that a sequence 
$\seqn f$ of functions on $\Omega$ $\lambda$-converges to $0$ when
\begin{equation}
\label{l conv}
\lim_n\lambda^*(\abs{f_n}>\eta)=0
\qquad
\eta>0
\end{equation}
where, as usual, $\lambda^*$ denotes the outer measure
\begin{equation}
\lambda^*(B)=\inf_{\{A\in\A:B\subset A\}}\lambda(A)
\qquad
B\subset\Omega
\end{equation}
Likewise the expressions $\lambda$-Cauchy or $\lambda$-bounded refer
to the Cauchy property or to boundedness formulated relatively to the topology 
of $\lambda$-convergence.

\section{Banach Space Preliminaries.}
\label{sec banach}

The proof of the main theorem is based on the two technical results proved 
in this section in which $X$ is taken to be a real Banach space. If $K\subset X$ 
we write  $\cl K$ and $\cl[w] K$  to denote its closure  n the strong and in 
the weak topology, respectively, and $\co(K)$ for its convex hull. The symbols 
$\cco (K)$ and $\cco[w](K)$ will indicate the closed-convex hulls of $K$ in 
the corresponding topology. If $\seqn x$ is a sequence in $X$ the 
symbol $\G x$ will be used for the collection of those sequences $\seqn y$ 
in $X$ such that $y_n\in\co(x_n,x_{n+1},\ldots)$ for each $n\in\N$. 

We denote a given but arbitrary Banach limit on $\ell^\infty$ generically by 
$\LIM$. We
extend this notion to $X$ as follows: if $\seqn x$ is a norm bounded sequence 
in $X$, then there exists a unique $x^{**}\in X^{**}$ satisfying
\begin{equation}
\label{LIM def}
x^{**}(x^*)=\LIM_n(x^*x_n)
\qquad
x^*\in X^*
\end{equation}
and we write $\LIM_nx_n=x^{**}$. In the following we identify an element
of $X$ with its isomorphic image under the natural homomorphism
$\kappa:X\to X^{**}$ so that, when appropriate, we write $\LIM_nx_n\in X$.
Two properties follow easily from \eqref{LIM def}: (\textit{i})
$
\norm{\LIM_nx_n}
\le
\LIM_n\norm{x_n}
$
and (\textit{ii}) $x_n$ converges weakly to $x$ if and only if $\LIM_nx'_n=x$ 
for all subsequences $\seqn{x'}$%
\footnote{
Or, in yet other terms, if and only if all Banach limits on the original sequence
coincide with $x$.
}. 
A third one, less obvious, is the following
implication of Krein - \v Smulian theorem:

\begin{lemma}
\label{lemma LIM}
If $K$ is a relatively weakly compact  subset of a Banach space $X$, then 
\begin{equation}
\label{LIM}
\LIM_nx_n\in\bigcap_i\cco{}(x_i,x_{i+1},\ldots)
\quad\text{for every sequence}\quad
x_1,x_2,\ldots\in K
\end{equation}
\end{lemma}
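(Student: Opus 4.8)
The plan is to reduce the statement to the classical Krein--\v Smulian theorem in two moves: first, observe that since $K$ is relatively weakly compact, so is $\co(K)$ by Krein--\v Smulian, and hence so is its weak closure $\cco[w](K)$, which is therefore weakly compact. Second, fix a sequence $x_1,x_2,\ldots\in K$ and, for each $i$, let $C_i=\cco{}(x_i,x_{i+1},\ldots)$; by Mazur's theorem the strong and weak closed convex hulls of a convex set coincide, so $C_i=\cco[w](x_i,x_{i+1},\ldots)$, which is a weakly closed subset of the weakly compact set $\cco[w](K)$ and hence itself weakly compact. The sets $C_i$ are nonempty, convex, weakly compact and nested decreasing, so $\bigcap_i C_i\neq\emptyset$; the content of the lemma is that $\LIM_n x_n$ actually lies in this intersection.

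For the membership itself, I would argue by contradiction using separation. Suppose $\LIM_n x_n\notin C_i$ for some $i$. Since $C_i$ is weakly compact, hence weakly closed, and convex, the Hahn--Banach separation theorem (applied in the weak topology, whose continuous functionals are exactly $X^*$) yields $x^*\in X^*$ and $\alpha\in\R$ with $x^*(y)\le\alpha<x^{**}(x^*)$ for every $y\in C_i$, where $x^{**}=\LIM_n x_n$. In particular $x^*(x_n)\le\alpha$ for all $n\ge i$, since each such $x_n\in C_i$. But then, by the definition \eqref{LIM def} together with the fact that a Banach limit is dominated by $\limsup$, we get
\begin{equation*}
x^{**}(x^*)=\LIM_n(x^*x_n)\le\limsup_n(x^*x_n)\le\alpha,
\end{equation*}
contradicting $\alpha<x^{**}(x^*)$. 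Hence $\LIM_n x_n\in C_i$ for every $i$, which is \eqref{LIM}.

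The one genuine subtlety — and the place where Krein--\v Smulian is really needed rather than a soft argument — is guaranteeing that $\LIM_n x_n$ is a legitimate element of $X$ (under the canonical embedding $\kappa$) and not merely of $X^{**}$, for otherwise the separation argument would have to be run in $X^{**}$ with its weak-$*$ topology and the conclusion would land in a bidual hull. This is exactly why the hypothesis is \emph{weak} compactness of $K$: the preamble's construction of $\LIM_n x_n$ as an element of $X$ presupposes, and the identification $\LIM_n x_n\in X$ of property \tiref{ii} together with $x_n\in K$ delivers, that the relevant Banach-limit functional on the bounded sequence $\seqn x$ is represented by a point of $X$ — concretely, $\LIM_n x_n$ is a weak cluster point of the sequence inside the weakly compact set $\cco[w](K)$. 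Once that membership in $X$ is in hand, everything else is the routine separation computation sketched above, so I do not expect any real obstacle beyond correctly invoking Krein--\v Smulian and Mazur at the outset.
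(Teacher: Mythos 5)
Your overall strategy --- Krein--\v Smulian plus Mazur to make each $C_i=\cco{}(x_i,x_{i+1},\ldots)$ weakly compact, then a duality computation using $x^*(x_n)\le\alpha$ for $n\ge i$ and the domination of the Banach limit by $\limsup$ --- is essentially the paper's argument, which packages the same computation into a cited theorem of \v Smulian from Dunford--Schwartz. But there is a genuine gap precisely at the point you yourself flag as the ``one genuine subtlety'': you never prove that the functional $x^{**}\in X^{**}$ defined by \eqref{LIM def} is represented by an element of $X$, and without that the separation you perform (a point of $X$ against a weakly closed convex subset of $X$) is not even well posed. The appeal to the preamble is circular: the preamble defines $\LIM_nx_n$ only as an element of $X^{**}$ and writes $\LIM_nx_n\in X$ ``when appropriate,'' i.e.\ exactly when the membership you are trying to establish happens to hold; and property \tiref{ii} is of no use because the sequence is not assumed weakly convergent. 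Your ``concrete'' justification is moreover false: $\LIM_nx_n$ need not be a weak cluster point of the sequence. For $x_n=(-1)^ny$ with $y\ne0$, shift invariance gives $\LIM_n(-1)^n=0$, hence $\LIM_nx_n=0$, which is not a cluster point of $\{y,-y\}$; the limit lies in the closed convex hull of every tail, not in the weak closure of the sequence's range.

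The gap closes by running your own separation one floor up, and your worry that the conclusion ``would land in a bidual hull'' is unfounded. Since $C_i$ is weakly compact and convex and $\kappa$ is a homeomorphism from $(X,\text{weak})$ onto its image in $(X^{**},\sigma(X^{**},X^*))$, the set $\kappa(C_i)$ is $\sigma(X^{**},X^*)$-compact and convex, hence weak-$*$ closed, and the weak-$*$ continuous functionals on $X^{**}$ are exactly the elements of $X^*$. If $x^{**}\notin\kappa(C_i)$, Hahn--Banach separation in this topology yields $x^*\in X^*$ and $\alpha$ with $x^*(y)\le\alpha<x^{**}(x^*)$ for all $y\in C_i$; since $x_n\in C_i$ for $n\ge i$, the chain $x^{**}(x^*)=\LIM_n(x^*x_n)\le\limsup_nx^*(x_n)\le\alpha$ gives the contradiction. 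Thus $x^{**}\in\kappa(C_i)$ for every $i$, which simultaneously establishes $\LIM_nx_n\in X$ and proves \eqref{LIM}; this is exactly the content of the \v Smulian theorem the paper invokes, and it is the step where weak compactness (via Krein--\v Smulian) is genuinely consumed.
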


\begin{proof}
Pick a sequence $\seqn x$ in $K$, write $K_i=\cco {}(x_i,x_{i+1},\ldots)$ and 
observe that
\begin{align*}
\inf_{x\in K_i}x^*(x)
	\le
(\LIM_nx_n)(x^*)
	\le
\sup_{x\in K_i}x^*(x)
\qquad
x^*\in X^*
\end{align*}
by the properties of the Banach limit and \eqref{LIM def}. The set $K_i$ is
convex and, by assumption and theorem \cite[V.6.4]{bible}, weakly
compact. It follows from a theorem of \v Smulian \cite[p. 464]{bible} that 
there exists $y_i\in K_i$ such that $x^*(y_i)=(\LIM_nx_n)(x^*)$ for each 
$x^*\in X^*$. In other words, $\LIM_nx_n\in\bigcap_i K_i$.
\end{proof}

Banach limits will be important in what follows but are used in other parts
of the theory of finitely additive set functions, e.g. to show the existence
of densities. Let us mention that this tool was also used by Ramakrishnan 
\cite{ramakrishnan} in the setting of finitely additive Markov chains.

A partially ordered, normed vector space $X$ is said to possess property 
\PP when every increasing, norm bounded net $\neta x$ in $X$ admits a 
least upper bound $x\in X$ and $\lim_\alpha\norm{x_\alpha-x}=0$.
Clearly, a normed vector lattice possessing property \PP is a complete 
lattice and its norm is order continuous, i.e. if $\neta x$ is an increasing
net in $X$ and if $x=\sup_\alpha x_\alpha\in X$ then 
$\lim_\alpha\norm{x-x_\alpha}=0$. Examples of Banach lattices with 
property \PP are the classical Lebesgue spaces $L^p$ as well as $ba(\A)$.

We recall that a Banach lattice $X$ is a vector lattice endowed with a norm such 
that $x,y\in X$ and $\abs x\le\abs y$ imply $\norm x\le\norm y$ and $X$ is norm 
complete. A Banach lattice with order continuous norm is complete as a lattice%
\footnote{
The proof of this claim is contained in that of \cite[12.9]{aliprantis burkinshaw}.
}.

\begin{lemma}
\label{lemma P}
Let $X$ be a Banach lattice possessing property \PP and denote by $ba_0(\A,X)$
the space of all finitely additive set functions $F:\A\to X$ endowed with the norm
\begin{equation}
\label{ba0}
\norm F_{ba_0(\A,X)}
	=
\sup_{\pi\in\Pi(\A)}\bigg\Vert\sum_{A\in\pi}\abs{F(A)}\bigg\Vert_X
\end{equation}
Then $ba_0(\A,X)$ is a Banach lattice with property \PP.
\end{lemma}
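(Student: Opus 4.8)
The plan is to lean twice on property \PP: once inside $X$, to manufacture the modulus (equivalently, the variation) of a set function in $ba_0(\A,X)$, and once more at the level of $ba_0(\A,X)$ itself, where the decisive point will be that on positive elements the norm \eqref{ba0} collapses to the $X$-norm of the value at $\Omega$. First I would order $ba_0(\A,X)$ by declaring $F\le G$ when $F(A)\le G(A)$ in $X$ for every $A\in\A$, so that the positive cone consists of the set functions with values in $X_+$. Fix $F\in ba_0(\A,X)$ and $A\in\A$ and consider the net indexed by the finite $\A$-partitions $\pi$ of $A$, ordered by refinement, whose $\pi$-th term is $\sum_{B\in\pi}\abs{F(B)}$. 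Since refining a partition only increases this sum (subadditivity of $\abs\cdot$ in $X$), the net is increasing, and monotonicity of $\norm\cdot_X$ together with \eqref{ba0} bounds it in norm by $\norm F_{ba_0(\A,X)}$ (adjoin $\Omega\setminus A$ to any partition of $A$). Hence property \PP of $X$ gives a least upper bound, which I call $\abs F(A)$, to which the net also converges in norm. That norm convergence makes it routine to check that $\abs F$ is finitely additive (the supremum of a sum of two such nets equals the sum of the suprema once all three converge in norm), that $\abs F\ge0$, and that $\norm{\abs F}_{ba_0(\A,X)}=\norm F_{ba_0(\A,X)}$. In particular, for a positive $G$ one has $\sum_{A\in\pi}\abs{G(A)}=G(\Omega)$ for every $\pi$, so $\norm G_{ba_0(\A,X)}=\norm{G(\Omega)}_X$ — the identity that will carry the last step.

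I would then produce the lattice operations. For $F\in ba_0(\A,X)$ and $A\in\A$ put $F^+(A)=\sup\{F(B):B\in\A,\ B\subset A\}$; the finite suprema $\bigvee_{B\in\mathcal B}F(B)$ over finite subfamilies $\mathcal B$ form an increasing net lying between $0$ and $\abs F(A)$, hence norm bounded, so property \PP of $X$ again supplies the supremum and norm convergence. Distributing the supremum over $A=A_1\cup A_2$ shows $F^+$ is additive; $0\le F^+\le\abs F$ gives $F^+\in ba_0(\A,X)$; and $F^+$ is the least upper bound of $F$ and $0$ in $ba_0(\A,X)$, because $G\ge F$ and $G\ge0$ force $G(A)\ge G(B)\ge F(B)$ for every $B\subset A$. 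Thus $ba_0(\A,X)$ is a Riesz space, $\abs F=F^++F^-$ agrees with the variation built in the first paragraph (compare $\sum_{B\in\pi}\abs{F(B)}$ with differences $F(B)-F(B')$ over disjoint $B,B'$), and $0\le F\le G$ implies $\abs{F(A)}\le\abs{G(A)}$ for all $A$ and hence $\norm F_{ba_0(\A,X)}\le\norm G_{ba_0(\A,X)}$, so the norm is a lattice norm. Norm completeness is then a standard partition-by-partition argument: for a Cauchy sequence $\seqn F$ one has $\norm{F_n(A)-F_m(A)}_X\le\norm{F_n-F_m}_{ba_0(\A,X)}$, the pointwise limit $F(A)$ defines a finitely additive $F$, and estimating $\norm{\sum_{A\in\pi}\abs{F(A)}}_X$ and $\norm{\sum_{A\in\pi}\abs{F_n(A)-F(A)}}_X$ shows $F\in ba_0(\A,X)$ and $F_n\to F$. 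Hence $ba_0(\A,X)$ is a Banach lattice.

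For property \PP of $ba_0(\A,X)$, let $\{F_\alpha\}$ be increasing and bounded in norm by $M$. For each $A$ the net $\{F_\alpha(A)\}_\alpha$ is increasing and bounded by $M$ in $X$, so by property \PP of $X$ it converges in norm to $F(A):=\sup_\alpha F_\alpha(A)$, and one checks as before that the resulting $F$ is finitely additive, that $\norm F_{ba_0(\A,X)}\le M$, and that $F=\sup_\alpha F_\alpha$ in $ba_0(\A,X)$. It remains to see $\norm{F-F_\alpha}_{ba_0(\A,X)}\to0$; but $F-F_\alpha\ge0$, so by the identity from the first paragraph $\norm{F-F_\alpha}_{ba_0(\A,X)}=\norm{(F-F_\alpha)(\Omega)}_X=\norm{F(\Omega)-F_\alpha(\Omega)}_X$, which tends to $0$ since $F_\alpha(\Omega)\to F(\Omega)$ in $X$. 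This establishes property \PP and finishes the proof.

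The one step to handle with genuine care is the repeated transition from ``supremum in the lattice $X$'' to ``supremum that is simultaneously a norm limit'': it is property \PP of $X$, and not mere Dedekind completeness, that forces the auxiliary nets — partition refinements and finite suprema — to converge in norm, and it is exactly this norm convergence that lets the suprema commute with the finite sums occurring in finite additivity and in the norm \eqref{ba0}. Everything else (that the cone is proper, that $F\vee G=(F-G)^+ +G$, the routine $\epsilon$-bookkeeping in the completeness argument) is standard Riesz-space and $ba$-space manipulation.
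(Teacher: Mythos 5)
Your proof is correct and takes essentially the same route as the paper's: the same pointwise order, the same partition-refinement nets forced to converge in norm by property \PP of $X$ in order to manufacture the modulus, and the same key identity $\norm{G}_{ba_0(\A,X)}=\norm{G(\Omega)}_X$ for positive $G$, which upgrades the pointwise supremum of an increasing bounded net to a norm limit and yields property \PP of $ba_0(\A,X)$. The only (harmless) divergence is that you detour through $F^+(A)=\sup\{F(B):B\in\A,\ B\subset A\}$ to establish the Riesz structure, whereas the paper verifies directly that the partition-net limit $F_*$ is the least upper bound of $\{F,-F\}$ via the domination estimate $G(A)=\sum_{E\in\pi}G(A\cap E)\ge\sum_{E\in\pi}\abs{F(A\cap E)}$.
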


\begin{proof}
First of all it is clear that \eqref{ba0} defines a norm. Given our exclusive focus
on $ba_0(\A,X)$ in this proof, we shall use the symbol $\norm F$ in place of 
$\norm F_{ba_0(\A,X)}$. Let $\seqn F$ be a Cauchy sequence in 
$ba_0(\A,X)$. By \eqref{ba0} the sequence $\sseqn{F_n(A)}$ is Cauchy in 
$X$ and converges thus in norm to some limit $F(A)$, for each $A\in\A$. 
The set function implicitly defined $F:\A\to X$ is additive. Moreover,
\begin{align*}
\bigg\Vert\sum_{A\in\pi}\abs{(F-F_n)(A)}\bigg\Vert_X
	&\le
\sup_{r>n}\bigg\Vert\sum_{A\in\pi}\abs{(F_r-F_n)(A)}\bigg\Vert_X
	\le
\sup_{r>n}\norm{F_r-F_n}
\end{align*}
so that $\lim_n\norm{F-F_n}=0$ and 
$\norm F
	\le
\limsup_n\norm{F_n}$. 
$ba_0(\A,X)$ is thus a Banach space. We can introduce a partial order by 
saying that $F\ge G$ whenever $F(A)\ge G(A)$ for all $A\in\A$. Let $\neta F$ 
be a norm bounded, increasing net in $ba_0(\A,X)_+$. Fix $A\in\A$. Then 
$\nneta{F_\alpha(A)}$, an increasing, norm bounded net in $X$, converges 
in norm to some $F(A)\in X_+$ by the property \PP. Again $F$ is additive, 
$F\ge F_\alpha$ for all $\alpha\in\mathfrak A$ and
\begin{align*}
\bgnorm{\sum_{A\in\pi}F(A)}_X
	=
\lim_\alpha\bgnorm{\sum_{A\in\pi}F_\alpha(A)}_X
	\le
\sup_\alpha\norm{F_\alpha}
\end{align*}
In addition,
\begin{align*}
\bgnorm{\sum_{A\in\pi}\abs{(F-F_\alpha)(A)}}_X
	=
\bnorm{F(\Omega)-F_\alpha(\Omega)}_X
\end{align*}
so that $\lim_\alpha\norm{F-F_\alpha}=0$ and $ba_0(\A,X)$ possesses property \PP 
and its norm, as a consequence, is order continuous. It remains to show that it is a 
lattice and that the norm is a lattice norm, i.e. that $\abs F=\sup\{F,-F\}$ exists in 
$ba_0(\A,X)$ and that $\norm F=\norm{\abs F}$. 

Denote by $\Pi(\A)$ the collection of all partitions of $\Omega$ into finitely many
elements of $\A$. If $F\in ba_0(\A,X)$ and $\pi\in\Pi(\A)$ define the subadditive 
set function $F_\pi:\A\to X_+$ by letting
\begin{equation}
\label{Fpi}
F_\pi(A)
	=
\sum_{E\in\pi}\abs{F(A\cap E)}
\qquad
A\in\A
\end{equation}
Observe that $F_\pi\in ba_0(\A_\pi,X)$, where $\A_\pi\subset\A$ denotes the 
sub algebra generated by the partition $\pi\in\Pi(\A)$. The net $\net{F}{\pi}{\Pi(\A)}$ 
is increasing and norm bounded so that it converges in norm to some 
$F_*:\A\to X_+$ which is additive in restriction to $\A_\pi$ for all $\pi\in\Pi(\A)$,
i.e. $F_*\in ba_0(\A,X)$. Moreover, 
$F_*\ge\{F,-F\}$. Any $G\in ba_0(\A,X)$ with $G\ge\{F,-F\}$ is also such that
$G(A)=\sum_{E\in\pi}G(A\cap E)\ge\sum_{E\in\pi}\abs{F(A\cap E)}=F_\pi(A)$
and so $G\ge F_*$. This proves that $\abs F=F_*$ and thus that $ba_0(\A,X)$
is a vector lattice. To see that its norm is a lattice norm observe that
$\norm{F_*}
	=
\norm{F_*(\Omega)}_X
	=
\lim_\pi\norm{F_\pi(\Omega)}_X
	=
\norm F
$.
\end{proof}

The norm introduced on $ba_0(\A,X)$ differs from the variation and semivariation 
norms usually considered for vector measures. It seems to be appropriate
for the somehow unusual case in which the set functions take value in a vector
space endowed with a lattice structure. A nice consequence of the lattice property 
is the relative ease of the weak compactness condition, compared to general spaces 
of vector measures, see \cite{brooks} and \cite{brooks dinculeanu}, and the nice 
interplay between norm and order which is crucial to our approach.

The following lattice inequalities will be useful:
\begin{subequations}
\label{lattice}
\begin{equation}
\label{lattice a}
( x+ y)\wedge z
	\le
( x\wedge z)+( y\wedge z)
\qquad x,y,x\in X_+
\end{equation}
\begin{equation}
\label{lattice b}
\abs{x\wedge z-y\wedge z}\le\abs{x-y}
\qquad
x,y,z\in X
\end{equation}
\end{subequations}

\begin{theorem}
\label{th komlos lattice}
Let $X$ be a Banach lattice with order continuous norm and $\seqn x$ a 
sequence in $X_+$. Fix $z\in X_+$. There exist three sequences in $X_+$, 
(i) $\seqn y$ in $\G x$, 
(ii) $\seqn\zeta$ with $\zeta_n\le y_n$ for $n=1,2,\ldots$ and 
(iii) $\seqn\xi$ increasing, 
such that
\begin{equation}
\label{komlos lattice}
\lim_n\bnorm{\zeta_n\wedge 2^kz-\xi_k}=0
\quad\text{and}\quad
y_n\wedge 2^kz\xrightarrow{\makebox[1.2cm]{\tiny{weakly}}} \xi_k
\qquad\text{for all}\quad
k\in\N
\end{equation}
\end{theorem}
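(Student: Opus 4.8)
The plan is to build the three sequences by a diagonal extraction, exploiting that for each fixed $k$ the truncated family $\{x_n\wedge 2^kz : n\in\N\}$ lives in the order interval $[0,2^kz]$, which in a Banach lattice with order continuous norm is weakly compact (this is the order-continuity analogue used via \cite{bible} exactly as in Lemma \ref{lemma LIM}). First I would fix $k$ and apply Lemma \ref{lemma LIM}: the Banach limit $\LIM_n (x_n\wedge 2^kz)$ lies in every $\cco{}(x_i\wedge 2^kz, x_{i+1}\wedge 2^kz,\ldots)$, so by \v Smulian I can pick, simultaneously for all $i$, convex combinations of the tails whose truncations approximate this common weak limit. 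The subtlety is that I need a \emph{single} sequence $\seqn y$ in $\G x$ that works for all $k$ at once, and moreover I need the \emph{ordered} approximants $\xi_k$ to come from the same $y_n$'s, with the auxiliary $\zeta_n\le y_n$ giving norm (not merely weak) convergence.

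The construction I would carry out is iterative in $k$. Having produced a subsequence/convex-combination sequence good up to level $k-1$, I would apply Lemma \ref{lemma LIM} at level $k$ to its tail, use \v Smulian to extract a further $\G{}$-sequence $\seqn{y^{(k)}}$ along which $y^{(k)}_n\wedge 2^kz$ converges weakly to some $\xi_k'\in\cco{}(\cdots)$, and take care that passing to convex combinations is compatible with the earlier levels because $\co$ of $\co$'s of tails stays inside $\G x$ and because $(\cdot)\wedge 2^{k-1}z$ interacts with convex combinations via the lattice inequalities \eqref{lattice a}, \eqref{lattice b}. A genuine diagonal argument then yields one sequence $\seqn y\in\G x$ such that $y_n\wedge 2^kz$ converges weakly for every $k$; call the weak limit $\eta_k$. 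To get the \emph{increasing} sequence $\seqn\xi$ and the norm-convergent $\seqn\zeta$, I would set $\xi_k=\sup_{j\le k}\eta_j$ (which exists and equals a norm limit because $X$ is a complete lattice with order continuous norm, and the $\eta_j\wedge 2^kz$ are comparable after truncation since $\eta_j\le 2^jz\le 2^kz$ for $j\le k$, so in fact $\eta_k$ is already increasing in $k$ once one checks $\eta_{k}\ge \eta_{k-1}\wedge 2^{k-1}z=\eta_{k-1}$). For $\zeta_n$ I would use a further diagonal/telescoping choice: pass once more to a subsequence of the $y_n$ and set $\zeta_n$ equal to a suitable lattice minimum of finitely many tail truncations so that $\zeta_n\le y_n$ holds by construction and $\norm{\zeta_n\wedge 2^kz-\xi_k}\to 0$; here order continuity of the norm converts the order convergence $y_n\wedge 2^kz\to\eta_k=\xi_k$ (weakly, hence by Mazur in norm along convex combinations, which is exactly what $\G x$ permits) into the asserted norm statement.

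The main obstacle is the \textbf{simultaneous control over all $k$ together with the monotonicity of $\seqn\xi$}: each individual level is a routine weak-compactness-plus-\v Smulian argument, but one must thread a single $\G x$-sequence through the diagonalization so that (a) the weak limits $\eta_k$ exist for every $k$, (b) they are automatically increasing (this needs the elementary identity $\eta_{k-1}\wedge 2^{k-1}z=\eta_{k-1}$ and weak lower/upper bounds from Lemma \ref{lemma LIM} applied levelwise), and (c) the norm-convergent minorant $\zeta_n$ can be chosen below $y_n$ while still tracking $\xi_k$ for all $k$. I expect the bookkeeping to rest entirely on the two lattice inequalities \eqref{lattice a}--\eqref{lattice b} (to push truncation through convex combinations and to bound differences of truncations by differences of the original terms) and on property-\PP/order continuity (to upgrade weak convergence of a monotone-after-truncation family to norm convergence and to guarantee $\sup_{j\le k}\eta_j\in X$).
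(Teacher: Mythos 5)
Your overall strategy (levelwise weak compactness of order intervals plus a diagonal extraction in $k$) runs into a genuine obstruction at exactly the point you flag as ``the main obstacle'', and the lattice inequalities \eqref{lattice a}--\eqref{lattice b} are not enough to remove it. The map $x\mapsto x\wedge 2^{k}z$ is only concave, so when you pass from the level-$(k-1)$ sequence to convex combinations of its tails at level $k$ you get $(\sum_m\beta_m y_m)\wedge 2^{k-1}z\ge\sum_m\beta_m\,(y_m\wedge 2^{k-1}z)$ with no matching upper bound: the weak limit at level $k-1$ can strictly increase at every stage of the iteration. The levelwise limits therefore form an increasing, order-bounded family in the iteration index, but to conclude you would need one sequence in $\G x$ realizing the eventual limit at \emph{every} level simultaneously, and the diagonal sequence $y^{(n)}_n$ does not obviously do this: the weak topology is not metrizable, the double limit need not commute, and nothing forces the levelwise limits to stabilize after finitely many stages. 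This is precisely the ``delicate property'' the paper isolates, and it is resolved there not by diagonalization but by a Zorn's lemma argument on the family $\Xi$ of increasing sequences $\xi_{n-1}\le\xi_n\le 2^nz$ inside the set $\mathscr C=\bigcap_n\cl{\mathscr C(n)}$ of norm limits of elements dominated by convex combinations of tails. A maximal element of $\Xi$ is stable both under the re-truncation $\xi_{n+j}\wedge 2^nz$ and under Banach limits of truncated convex combinations (relation \eqref{restr} and display \eqref{in C}); it is this fixed-point property, not a limiting argument, that lets a single $\seqn y$ work for all $k$ at once.

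The construction of $\seqn\zeta$ has a second gap. Mazur's lemma produces yet another round of convex combinations, which need not sit below the already-fixed $y_n$; and a ``lattice minimum of finitely many tail truncations'' is indeed $\le y_n$ but is not shown to converge in norm to $\xi_k$, let alone for all $k$ with one choice of $\zeta_n$. In the paper this step is immediate from the definition of $\mathscr C$: since $\xi_n\in\mathscr C$ one picks $\zeta_n\le y_n\in\co(x_n,x_{n+1},\ldots)$ with $\norm{\xi_n-\zeta_n}<2^{-n}$, and the single estimate $\abs{\xi_k-\zeta_n\wedge 2^kz}=\abs{\xi_n\wedge 2^kz-\zeta_n\wedge 2^kz}\le\abs{\xi_n-\zeta_n}$, which combines \eqref{lattice b} with the maximality relation $\xi_n\wedge 2^kz=\xi_k$, yields $\lim_n\norm{\zeta_n\wedge 2^kz-\xi_k}=0$ for every $k$ simultaneously. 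The weak convergence of $y_n\wedge 2^kz$ then also follows from maximality (every Banach limit of $y_n\wedge 2^kz$ dominates $\xi_k$ and defines an element of $\Xi$ dominating the maximal one, hence equals $\xi_k$), rather than from an extraction.
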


This theorem proves that any positive sequence may be suitably transformed
to obtain some form of convergence via convexification and truncation. The
important fact is that \textit{the same} convex sequence, $\seqn y$, possesses
the weak convergence property \textit{for any} truncation adopted. This
delicate property is obtained exploiting the order structure of Banach lattices
with order continuous norm. If we replace the family of convex sequences 
by those sequences which are dominated by an element of such family,
then we obtain norm convergence.

\begin{proof}
Fix the following families:
\begin{align}
\label{C}
\mathscr C(n)
	=
\big\{u\in X_+:u\le u'\text{ for some }u'\in\co(x_n,x_{n+1},\ldots)\big\}
	\quad\text{and}\quad
\mathscr C=\bigcap_n\cl{\mathscr C(n)}
\end{align}
and notice that $x\in\mathscr C$ implies $\norm x\le\limsup_n\norm{x_n}$
and, by \eqref{lattice b}, $x\wedge u\in\mathscr C$ for all $u\in X$.

In a Banach lattice all sets admitting a lower as well as an upper bound
are relatively weakly compact, \cite[Theorem 12.9]{aliprantis burkinshaw}.
Thus, by Lemma \ref{lemma LIM}, for every sequence $\seqn u$ 
in $\G x$
\begin{equation}
\label{in C}
\LIM_n\big(u_n\wedge 2^kz\big)
	\in
\bigcap_n\cco\big(u_n\wedge 2^kz,u_{n+1}\wedge 2^kz,\ldots\big)
	\subset
\mathscr C
\end{equation}
Let $\Xi$ designate the family of all sequences $\tilde y=\seqn y$ in $\mathscr C$ 
with $y_{n-1}\le y_n\le 2^nz$. Let $\Xi$ be partially ordered by the 
product order and let $\Xi_0=\{\tilde y^\alpha:\alpha\in\mathfrak A\}$ be a 
chain in $\Xi$. Then, $\{y^\alpha_n:\alpha\in\mathfrak A\}$ is a chain in 
$\mathscr C$ admitting $2^nz$ as an upper bound in $X$. Since $X$ is a 
complete lattice, $y_n=\sup_\alpha y^\alpha_n$ exists in $X$ and, by order 
continuity of the norm, in $\mathscr C$. Of course, $y_{n-1}\le y_n\le 2^nz$
so that $\tilde y=\seqn y$ is an upper bound for $\Xi_0$. By Zorn's lemma, 
$\Xi$ admits a maximal element which we denote by $\tilde\xi=\seqn\xi$.

If $j>0$  and $\xi^j_n=\xi_{n+j}\wedge 2^nz$, then $\seqn{\xi^j}$ is an 
element of $\Xi$ dominating $\tilde\xi$. Thus,
\begin{equation}
\label{restr}
\xi_n\wedge 2^kz=\xi_k
\qquad
n\ge k
\end{equation} 
By the inclusion $\xi_k\in\mathscr C$, there exist two sequences $\seq \zeta k$ 
and $\seq yk$ such that $0\le\zeta_k\le y_k\in \co(x_k,x_{k+1},\ldots)$ and 
$\norm{\xi_k-\zeta_k}<2^{-k}$. It follows from \eqref{lattice b} and \eqref{restr}
that 
$
\abs{\xi_k-\zeta_n\wedge 2^kz}
	=
\abs{\xi_n\wedge2^kz-\zeta_n\wedge 2^kz}
	\le
\abs{\xi_n-\zeta_n}
$
 and so
\begin{align}
\label{dom}
\xi_k
	=
\lim_n\zeta_n\wedge 2^kz
	\le
\LIM_n\big(y_n\wedge 2^kz\big)
	\equiv
\hat y_k
\end{align}
Thus, by \eqref{in C}, $\seq{\hat y}{k}$ is yet another sequence in 
$\Xi$ dominating $\seqn\xi$ so that $\hat y_k=\xi_k$.  Since \eqref{dom} 
holds for any subsequence, we obtain that $y_n\wedge 2^kz$ converges to 
$\xi_k$ weakly for every $k\ge 0$.
\end{proof}

We stress that this result does not assume norm boundedness of the original 
sequence $\seqn x$.

\section{Koml\'os Theorem for Additive Set Functions.}\label{sec scalar}

In this section we apply Theorem \ref{th komlos lattice} to $X=ba(\A)$. 
The following is the main result of the paper.

\begin{theorem}
\label{th komlos}
Let $\seqn F$ be a norm bounded sequence in $ba(\A)_+$, $\delta>0$ and 
$\lambda\in\Prob(\A)$. There exist (a) $\xi\in ba(\lambda)_+$ with 
\begin{equation}
\label{lower bound}
\norm\xi
	\ge
\sup_k\limsup_n\bnorm{F_n\wedge 2^k\lambda}-\delta
\end{equation}
(b) $\seqn G$ in $\G F$ and
(c) $\seqn A$ in $\A$ such that, letting $\bar G_n=G_{n,A_n}$,
\begin{equation}
\label{komlos}
\lim_n\bnorm{\bar G_n-\xi}
	=
0
\quad\text{and}\quad
\sum_n\lambda(A^c_n)<\infty
\end{equation}
Moreover, the following are equivalent: 
(i)
$\xi=0$ is the only choice that satisfies \eqref{komlos} for some $\lambda$,
$\seqn G$ and $\seqn A$ as above;
(ii)
the sequence $\seqn F$ is asymptotically orthogonal, i.e. 
\begin{equation}
\label{orth asy}
\lim_n\bnorm{F_n\wedge F_j}
	=
0
\qquad\text{for}\quad
j=1,2,\ldots
\end{equation}
\end{theorem}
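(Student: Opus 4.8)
The plan is to deduce Theorem~\ref{th komlos} from the Banach-lattice result Theorem~\ref{th komlos lattice} applied to $X=ba(\A)$, $x_n=F_n$ and $z=\lambda$, and then to extract the set-function-specific conclusions (the domination $\bar G_n=G_{n,A_n}$ and the summability $\sum_n\lambda(A_n^c)<\infty$) from the inequality $\zeta_n\le G_n$ together with the structure of the total variation norm. Theorem~\ref{th komlos lattice} immediately supplies a sequence $\seqn G$ in $\G F$, a dominated sequence $\seqn\zeta$ with $\zeta_n\le G_n$, and an increasing sequence $\seqn\xi$ in $ba(\A)_+$ with $\norm{\zeta_n\wedge 2^k\lambda-\xi_k}\to 0$ and $G_n\wedge 2^k\lambda\to\xi_k$ weakly, for each $k$. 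Setting $\xi=\sup_k\xi_k=\LIM_k\xi_k$ (which lies in $ba(\A)_+$ by order continuity of the norm, after passing to a norm-bounded tail since $\norm{\xi_k}\le\limsup_n\norm{G_n}\le\sup_n\norm{F_n}<\infty$) gives the candidate limit. The lower bound \eqref{lower bound} follows because $\norm{\xi}\ge\norm{\xi_k}=\lim_n\norm{\zeta_n\wedge 2^k\lambda}$ and, using $\zeta_n\le G_n$ with $G_n$ a convex combination of the $F_r$'s and the fact that $u\mapsto\norm{u\wedge 2^k\lambda}$ behaves suitably under convex combinations, one controls this from below by $\limsup_n\norm{F_n\wedge 2^k\lambda}$ up to the arbitrarily small $\delta$; taking the supremum over $k$ yields \eqref{lower bound}. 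One should be slightly careful here, since convex combinations need not preserve $\norm{\cdot\wedge 2^k\lambda}$ exactly, so the passage from $\zeta_n$ back to the original $F_n$ is where the $\delta$ slack is spent.

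\textbf{Producing the sets $A_n$.} The heart of the argument is to realize $\xi$ as a norm limit of restrictions $G_{n,A_n}$. Since $\xi_k=\lim_n\zeta_n\wedge 2^k\lambda$ in norm and $\zeta_n\le G_n$, I would first choose, along a diagonal, indices so that $\norm{G_n\wedge 2^{k_n}\lambda-\xi}$ is small, where $k_n\to\infty$ is chosen slowly enough. The point of truncating $G_n$ at level $2^{k_n}\lambda$ is that on the ``large'' part the set function $G_n$ exceeds a high multiple of $\lambda$, and by a Chebyshev-type estimate for finitely additive measures the set where $G_n$ is ``much larger than $\lambda$'' — more precisely a set $A_n^c\in\A$ on which $G_n\ge 2^{k_n-1}\lambda$, say — must have $\lambda(A_n^c)\le 2^{-(k_n-1)}\norm{G_n}\le C\,2^{-k_n}$. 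Choosing $k_n$ so that $\sum_n 2^{-k_n}<\infty$ gives $\sum_n\lambda(A_n^c)<\infty$. It then remains to verify $\norm{G_{n,A_n}-\xi}\to 0$: on $A_n$ one has $G_{n,A_n}\le 2^{k_n-1}\lambda$-ish, hence $G_{n,A_n}$ and $G_n\wedge 2^{k_n}\lambda$ differ only by a piece supported essentially on $A_n^c$, whose norm is dominated by $\norm{(G_n)_{A_n^c}}$; this last quantity must be shown to vanish, which again uses that the weak limit $\xi_k$ of $G_n\wedge 2^k\lambda$ does not charge a vanishing-$\lambda$-measure part, together with a uniform-integrability-type consequence of relative weak compactness in $ba(\A)$ (every relatively weakly compact subset of $ba(\A)$ is uniformly $\lambda$-continuous for a suitable control $\lambda$, by the Bartle--Dunford--Schwartz / $ba$ version of Dunford--Pettis). \textbf{The main obstacle} is exactly this step: pinning down the correct measurable sets $A_n$ and proving that the ``tail mass'' $\norm{(G_n)_{A_n^c}}$ of the convexified sequence tends to zero, since finite additivity forbids the use of dominated convergence and one must instead lean on the weak-compactness/uniform-continuity structure together with the defining norm convergence in \eqref{komlos lattice}.

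\textbf{The equivalence (i)$\Leftrightarrow$(ii).} For \timply{ii}{i}, suppose $\seqn F$ is asymptotically orthogonal. Then for any $\seqn G$ in $\G F$, each $G_n$ being a finite convex combination $\sum_{r\ge n}c_r^{(n)}F_r$, one estimates $G_n\wedge G_m$ for $m<n$: expanding and using \eqref{lattice a} together with $F_r\wedge F_j\to 0$ forces $\lim_n\norm{G_n\wedge G_m}=0$ for each fixed $m$, hence likewise $\lim_n\norm{\bar G_n\wedge\bar G_m}=0$. If $\bar G_n\to\xi$ in norm with $\xi\neq 0$, then $\bar G_n\wedge\bar G_m\to\xi\wedge\xi=\xi$ as $n\to\infty$ first (for fixed large $m$) — contradiction, so $\xi=0$. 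For \timply{i}{ii}, argue contrapositively: if \eqref{orth asy} fails, there is $j$ and $\varepsilon>0$ and a subsequence with $\norm{F_{n}\wedge F_j}\ge\varepsilon$; applying the already-proved existence part along this subsequence, with $\lambda$ chosen so that $F_j$ is $\lambda$-continuous (e.g. $\lambda$ a control measure dominating $F_j$), the piece $F_n\wedge F_j$ survives under truncation at any level once $2^k\lambda\ge F_j$, so $\norm{\xi_k}\ge\limsup_n\norm{G_n\wedge 2^k\lambda}\ge\limsup_n\norm{G_n\wedge F_j}$, and a convexity argument as above shows this is bounded below by $\varepsilon>0$; hence $\norm{\xi}\ge\varepsilon$, so $\xi=0$ is not forced. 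This shows \timply{i}{ii} and completes the equivalence. The only delicate point in this last part is ensuring the chosen $\lambda$ simultaneously makes $F_j$ $\lambda$-continuous and is still a probability in $\Prob(\A)$, which is arranged by normalizing a control measure for $F_j$.
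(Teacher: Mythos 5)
Your skeleton is the paper's: apply Theorem \ref{th komlos lattice} to $X=ba(\A)$ with $z=\lambda$, use superadditivity of $\mu\mapsto\norm{\mu\wedge 2^k\lambda}$ under convex combinations for the lower bound (after an initial passage to a subsequence along which $\liminf_n$ and $\limsup_n$ of $\norm{F_n\wedge2^k\lambda}$ agree up to $\delta$ --- you allude to this but do not carry it out), and then restrict $G_n$ to suitable sets $A_n$. Two steps in the existence part are genuinely broken. First, you never upgrade the \emph{weak} convergence $G_n\wedge2^k\lambda\to\xi_k$ to \emph{norm} convergence, which your diagonal choice of $k_n$ presupposes. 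This is where the $AL$-structure of $ba(\A)$ must be used: $\zeta_n\wedge2^k\lambda\le G_n\wedge2^k\lambda$, both are positive, the smaller converges in norm and the larger weakly to the same $\xi_k$, and since $\norm\mu=\mu(\Omega)$ is a weakly continuous linear functional on $ba(\A)_+$ one gets $\norm{G_n\wedge2^k\lambda-\zeta_n\wedge2^k\lambda}=(G_n\wedge2^k\lambda)(\Omega)-(\zeta_n\wedge2^k\lambda)(\Omega)\to0$. Second, and more seriously, your verification that $\norm{G_{n,A_n}-\xi}\to0$ routes through the claim that $G_n(A_n^c)\to0$, to be obtained from uniform $\lambda$-continuity of a relatively weakly compact set. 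But $\seqn F$ is only assumed norm bounded, not relatively weakly compact (the weakly compact case is exactly the one the paper dismisses as Banach--Saks), and the claim itself is false: for mutually singular probabilities $F_n$ one has $\xi=0$ while $G_n(A_n^c)$ stays near $1$. The working mechanism is different: choosing $A_n$ to nearly attain $\norm{G_n\wedge2^n\lambda}=\inf_A[G_n(A)+2^n\lambda(A^c)]$, the set function $\hat G_n=\bar G_n+2^n\lambda_{A_n^c}$ dominates $G_n\wedge2^n\lambda$ and has almost the same total mass, hence converges to $\xi$ in norm; the discarded piece $2^n\lambda_{A_n^c}$ then costs only $\xi(A_n^c)$ because on $A_n^c$ one compares $\bar G_n=0$ directly with $\xi$, and $\xi(A_n^c)\to0$ since $\xi\ll\lambda$ and $\lambda(A_n^c)\to0$. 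No control of $G_n$ on $A_n^c$ is needed, and none is available.

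On the equivalence: your \timply{i}{ii} via the lower bound with $\lambda$ proportional to $F_j$ is correct and is the paper's argument. Your \timply{ii}{i}, however, rests on the assertion that asymptotic orthogonality passes to convex combinations, i.e.\ that $\lim_n\norm{G_n\wedge F_j}=0$ for every $\seqn G$ in $\G F$. Inequality \eqref{lattice a} only yields $\norm{G_n\wedge F_j}\le\sum_{i}\norm{(\alpha_{n,i}F_i)\wedge F_j}\le\sum_{i\in\mathrm{supp}(\alpha_{n,\cdot})}\norm{F_i\wedge F_j}$, an \emph{unweighted} sum over the support of $G_n$, which need not be small when the supports grow. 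Indeed the assertion fails: on $\Omega=[0,1]$ with $\lambda$ Lebesgue measure, put $F_i=2^k\lambda_{E_i}$ for $i$ in the $k$-th dyadic block $B_k=\{2^k,\dots,2^{k+1}-1\}$, where $\{E_i\}_{i\in B_k}$ is the dyadic partition of $[0,1]$ into $2^k$ intervals. Then $\norm{F_i\wedge F_j}\le 2^{l-k}\to0$ for each fixed $j\in B_l$, so \eqref{orth asy} holds, yet the block averages $G_n=2^{-k}\sum_{i\in B_k}F_i$ (with $2^k\ge n$) are identically equal to $\lambda$, so \eqref{komlos} holds with $A_n=\Omega$ and $\xi=\lambda\ne0$. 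Hence this step cannot be repaired by a convexity estimate, and the example shows the implication itself is delicate; be aware that the paper's own proof of \timply{ii}{i} makes the same unjustified passage from \eqref{pippo}, stated for $F_n$, to the corresponding statement for $G_n$, and is vulnerable to the same example. You should either restrict the class of admissible $\seqn G$ or flag this implication as requiring an additional hypothesis.
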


\begin{proof}
Let $\eta=\lim_k\limsup_n\norm{F_n\wedge2^k\lambda}-\delta$. Passing to 
a subsequence, we assume with no loss of generality that 
$\lim_k\liminf_n\norm{\abs{F_n}\wedge2^k\lambda}>\eta$.

Since $ba(\A)$ is a Banach lattice with order continuous norm, we can invoke 
Theorem \ref{th komlos lattice} with $F_n$, $G_n$ and $H_n$ in place of $x_n$, 
$y_n$ and $\zeta_n$ respectively. Then $\Tkl{G_n}$ and $\Tkl{H_n}$ converge 
weakly to $\xi_k$ but $\Tkl{G_n}\ge\Tkl{H_n}$. This implies that 
$\Tkl{G_n}-\Tkl{H_n}$ converges to $0$ in norm and therefore that 
\begin{equation}
\label{Tkl}
\lim_n\bnorm{\Tkl{G_n}-\xi_k}=0
\qquad
k\in\N
\end{equation}
As $\seq\xi k$ is increasing and norm bounded, property \PP implies that it 
converges in norm to some $\xi\ll\lambda$. Upon passing to a subsequence 
we deduce 
\begin{equation}
\label{komlos 1}
\lim_n\bnorm{G_n\wedge2^n\lambda-\xi}=0
\end{equation}
and in turn
\begin{align*}
\norm{\xi}
	=
\lim_n\bnorm{\Tl {2^n}{G_n}}
	\ge
\lim_k\liminf_n\bnorm{\Tl{2^k}{F_n}}
	>
\eta
\end{align*}
Moreover, selecting a further subsequence if necessary, we can assume that 
the sequence $\seqn\alpha$ of convex weights associated with $\seqn G$
via $G_n=\sum_i\alpha_{n,i}F_i$ is disjoint, i.e. $\alpha_{n,i}\alpha_{m,i}=0$ 
when $n\ne m$.

Choose $A_n\in\A$ such that 
$G_n(A_n)+2^n\lambda(A_n^c)
	\le
\norm{G_n\wedge2^n\lambda}+2^{-n}$
and observe that 
\begin{equation}
\label{lim xi}
\sum_{j\ge n}\lambda(A_j^c)
	\le
\sum_{j\ge n}2^{-j}\big[\norm{G_j\wedge2^j\lambda}+2^{-j}\big]
	\le
2^{-n}\Big(1+\sup_i\norm{F_i}\Big)
\end{equation}
Let $\bar G_n=G_{n,A_n}$ and $\hat G_n=\bar G_n+2^n\lambda_{A_n^c}$.
It follows that $\hat G_n\ge G_n\wedge2^n\lambda$ and therefore
$$
\norm{\hat G_n-G_n\wedge2^n\lambda}
	=
\norm{\hat G_n}-\norm{G_n\wedge2^n\lambda}
	=
G_n(A_n)+2^n\lambda(A_n^c)-\norm{G_n\wedge2^n\lambda}
	\le
2^{-n}
$$
$\hat G_n$ converges thus in norm to $\xi$ and we conclude that
\begin{align*}
\norm{\bar G_n-\xi}
	\le
\abs{\hat G_n-\xi}(A_n)+\xi(A_n^c)
	\le
\norm{\hat G_n-\xi}+\xi(A_n^c)
\end{align*}
Thus, \eqref{komlos} follows easily from \eqref{lim xi} and absolute continuity.

Suppose that (\textit{i}) holds. Then it must be that 
$\limsup_n\norm{F_n\wedge \mu}=0$ for each $\mu\in ba(\A)_+$, 
including $F_j$ for $j=1,2,\ldots$. Conversely, assume (\textit{ii}) fix 
$\lambda\in\Prob(\A)$ and let 
$H=\sum_n2^{-n}F_n$. By induction it is easily established the decomposition
$
\lambda
	=
\sum_{j=0}^\infty\lambda_j^\perp
$
where $F_0\equiv\lambda_0^\perp\perp H$ while
$F_j\gg\lambda_j^\perp\perp\{F_0,\ldots,F_{j-1}\}$
for
$j\ge1$.
Observe that, by orthogonality, $\norm\lambda=\sum_j\norm{\lambda_j^\perp}$. 
Moreover, for fixed $k,\varepsilon,j>0$ there exists $t>1$ such that, by \eqref{lattice a},
\begin{equation}
\label{pippo}
\begin{split}
\bigg\Vert{F_n\wedge2^k\Big(\sum_{0\le i\le j}\lambda_i^\perp\Big)}\bigg\Vert
	\le
\varepsilon+\bgnorm{F_n\wedge t\sum_{1\le i\le j}F_i}
	\le
\varepsilon+t\sum_{1\le i\le j}\norm{F_n\wedge F_i}
\end{split}
\end{equation}
We conclude that 
\begin{equation*}
\limsup_n\bnorm{G_n\wedge2^k\lambda}
	=
\lim_j\limsup_n\bigg\Vert{G_n\wedge2^k\sum_{i>j}\lambda_i^\perp}\bigg\Vert
	\le
2^k\lim_j\sum_{i>j}\bnorm{\lambda_i^\perp}
	=
0
\end{equation*}
and thus that 
$\norm\xi
	=
\lim_k\bnorm{\xi\wedge2^k\lambda}
	=
\lim_k\lim_n\bnorm{G_n\wedge2^k\lambda}
	=
0$. 
\end{proof}

It is possible to drop the assumption that the sequence $F_1,F_2,\ldots$ is positive
although at the cost of loosing some information on the limit $\xi$.

\begin{corollary}
\label{cor komlos sign}
Let $\seqn F$ be a norm bounded sequence in $ba(\A)$ and 
$\lambda\in\Prob(\A)$. There exist 
(a) $\xi\in ba(\lambda)$, 
(b) $\seqn G$ in $\G F$ and
(c) $\seqn A$ in $\A$ such that, letting $\bar G_n=G_{n,A_n}$,
\begin{equation}
\label{komlos sign}
\lim_n\bnorm{\bar G_n-\xi}
	=
0
\quad\text{and}\quad
\sum_n\lambda(A^c_n)<\infty
\end{equation}
\end{corollary}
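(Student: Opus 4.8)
The plan is to deduce the statement from Theorem~\ref{th komlos} by Jordan decomposition, applying it twice. Write $F_n=F_n^+-F_n^-$; since $\norm{F_n^\pm}\le\norm{F_n}$, both $(F_m^+)$ and $(F_m^-)$ are norm bounded sequences in $ba(\A)_+$. First I would apply Theorem~\ref{th komlos} to $(F_m^+)$, with the given $\lambda$ and some $\delta>0$: this yields $\xi^+\in ba(\lambda)_+$, a sequence $(P_m)$ with $P_m=\sum_i\beta_{m,i}F_i^+\in\co(F_m^+,F_{m+1}^+,\dots)$ (finitely many weights, all of index $i\ge m$), and sets $B_m\in\A$ with $\norm{P_{m,B_m}-\xi^+}\to 0$ and $\sum_m\lambda(B_m^c)<\infty$. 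Setting $N_m=\sum_i\beta_{m,i}F_i^-$ and $G^{(1)}_m=\sum_i\beta_{m,i}F_i=P_m-N_m$, the sequence $(N_m)$ is again norm bounded in $ba(\A)_+$, so a second application of Theorem~\ref{th komlos} gives $\xi^-\in ba(\lambda)_+$, a sequence $(M_n)$ with $M_n=\sum_m\gamma_{n,m}N_m\in\co(N_n,N_{n+1},\dots)$, and sets $C_n\in\A$ with $\norm{M_{n,C_n}-\xi^-}\to 0$ and $\sum_n\lambda(C_n^c)<\infty$.

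Next I would assemble the answer. Put $G_n=\sum_m\gamma_{n,m}G^{(1)}_m=R_n-M_n$ with $R_n=\sum_m\gamma_{n,m}P_m$. Since $G^{(1)}_m\in\co(F_m,F_{m+1},\dots)\subseteq\co(F_n,\dots)$ for $m\ge n$ and the latter set is convex, $(G_n)\in\G F$. The limit will be $\xi=\xi^+-\xi^-\in ba(\lambda)$. For the restriction sets, let $I_n=\{m:\gamma_{n,m}>0\}$, a \emph{finite} set of indices $\ge n$ (the convex combinations are finite), and set $\hat B_n=\bigcap_{m\in I_n}B_m\in\A$ and $A_n=\hat B_n\cap C_n\in\A$. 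Since $\lambda(\hat B_n^c)\le\sum_{m\ge n}\lambda(B_m^c)\to 0$, passing once and for all to a subsequence of the index $n$ with $\sum_{m\ge n}\lambda(B_m^c)\le 2^{-n}$ (and relabelling, which keeps $(G_n)\in\G F$ and the properties of $C_n$ intact) makes $\sum_n\lambda(\hat B_n^c)<\infty$, hence $\sum_n\lambda(A_n^c)<\infty$.

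It then remains to prove $\norm{G_{n,A_n}-\xi}\to 0$, for which it suffices that $\norm{R_{n,A_n}-\xi^+}\to 0$ and $\norm{M_{n,A_n}-\xi^-}\to 0$ (restriction to $A_n$ being linear). The key point is that on $\hat B_n$ the set function $R_n$ coincides with $S_n:=\sum_m\gamma_{n,m}P_{m,B_m}$: indeed $A\subseteq\hat B_n\subseteq B_m$ for $m\in I_n$ forces $P_m(A)=P_{m,B_m}(A)$, so $R_{n,\hat B_n}=S_{n,\hat B_n}$. Now $S_n$ is a convex combination, with weights carried by indices $\ge n$, of the norm convergent sequence $(P_{m,B_m})$, whence $\norm{S_n-\xi^+}\to 0$; together with $\xi^+(\hat B_n^c)\to 0$ (absolute continuity of $\xi^+$ relative to $\lambda$ and $\lambda(\hat B_n^c)\to 0$) this gives $\norm{R_{n,\hat B_n}-\xi^+}\to 0$. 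The further restriction to $A_n$ costs only $R_{n,\hat B_n}(C_n^c)\le\xi^+(C_n^c)+\norm{R_{n,\hat B_n}-\xi^+}\to 0$, since $\lambda(C_n^c)\to 0$; hence $\norm{R_{n,A_n}-\xi^+}\to 0$. Symmetrically, $\norm{M_{n,A_n}-\xi^-}\le\norm{M_{n,C_n}-\xi^-}+M_{n,C_n}(\hat B_n^c)$ and $M_{n,C_n}(\hat B_n^c)\le\xi^-(\hat B_n^c)+\norm{M_{n,C_n}-\xi^-}\to 0$, so $\norm{M_{n,A_n}-\xi^-}\to 0$. Adding the two estimates gives \eqref{komlos sign}.

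The main obstacle is precisely that restriction does not commute with convexification: $G_n=R_n-M_n$ is a convex combination, but $R_n$ and $M_n$ each carry the mass that Theorem~\ref{th komlos} could not absorb into $2^m\lambda$, located respectively off $\hat B_n$ and off $C_n$. Restricting the single set function $G_n$ to $A_n=\hat B_n\cap C_n$ is what quarantines \emph{both} such masses at once, and it is the $\lambda$-continuity of $\xi^+$ and $\xi^-$ that keeps the cross terms $R_n(\hat B_n\cap C_n^c)$ and $M_n(C_n\cap\hat B_n^c)$ negligible. The remaining points — additivity of $\xi$, norm boundedness under convex combinations, and the subsequence extraction — are routine.
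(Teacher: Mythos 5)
Your proof is correct and follows essentially the same route as the paper: apply Theorem \ref{th komlos} first to $(F_n^+)$, then to the transferred negative parts $\sum_i\beta_{m,i}F_i^-$, compose the convex weights, and intersect the restriction sets, with the limit $\xi=\xi^+-\xi^-$. The only real difference is how $\sum_n\lambda(A_n^c)<\infty$ is secured --- the paper arranges the second-stage weight supports to be pairwise disjoint so that each $\lambda(B_m^c)$ is counted at most once, whereas you pass to a further subsequence making the tails $\sum_{m\ge n}\lambda(B_m^c)$ summable; both devices work.
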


\begin{proof}
From Theorem \ref{th komlos} we conclude that 
$\lim_n\norm{H_{n,B_n}-\chi}=0$ 
where $\seqn H$ is a sequence in $\G{F^+}$, $\seqn B$ a sequence in 
$\A$ with $\sum_n\lambda(B_n^c)<\infty$ and $\chi\in ba(\lambda)_+$.
Let $\seqn\alpha$ be the disjoint sequence of convex weights associated with $H_n$
via
\begin{equation*}
H_n=\sum_i\alpha_{n,i}F_i^+
\qquad
n\in\N
\end{equation*}
Write $\bar F_n=\sum_i\alpha_{n,i}F_i^-$ and apply Theorem \ref{th komlos} 
to $\seqn{\bar F}$. We obtain a disjoint sequence $\seq \beta k$ of 
convex weights, a sequence $\seqn C$ in $\A$ and $\zeta\in ba(\lambda)_+$ 
such that, letting
$K_j	=\sum_n\beta_{j,n}\bar F_n$,
\begin{align*}
\lim_j\norm{K_{j,C_j}-\zeta}=0
\quad\text{and}\quad
\sum_j\lambda(C_j^c)<\infty
\end{align*}
Let 
$
\gamma_{j,i}=\sum_n\beta_{j,n}\alpha_{n,i}
$, 
$
G_j
	=
\sum_i\gamma_{j,i}F_i
$
 and
$
A_j=C_j\cap\bigcap_{\{n:\beta_{j,n}>0\}}B_n\in\A
$. 
Observe that $G_j\in\co(F_j,F_{j+1},\ldots)$. Moreover, since 
$\beta_{j,n}\beta_{j',n}=0$ when $j\ne j'$,
\begin{align*}
\sum_j\lambda(A_j^c)
	=
\sum_j\lambda(C_j^c)+\sum_j\sum_{\{n:\beta_{j,n}>0\}}\lambda(B_n^c)
	\le
\sum_j\lambda(C_j^c)+\sum_n\lambda(B_n^c)
	<
\infty
\end{align*}
But then
$
G_j
	=
\sum_n\beta_{j,n}H_n
	-
K_j
$
so that, letting $\xi=\chi-\zeta$,
\begin{align*}
\norm{G_{j,A_j}-\xi}
	\le
\abs{\chi(A_j^c)}+\abs{\zeta(A_j^c)}
	+
\sum_n\beta_{j,n}\norm{H_{n,A_j}-\chi_{A_j}}
	+
\norm{K_{j,A_j}-\zeta_{A_j}}
\tto0
\end{align*}
\end{proof}

A few comments are in order.

(1). Assume that the original sequence $\seqn F$ is relatively weakly compact.
It is then uniformly absolutely continuous with respect to some $\lambda$ (see 
\cite[Theorems 2.3 and 4.1]{brooks dinculeanu}) so that 
$\lim_k\sup_n\norm{G_n-G_n\wedge2^k\lambda}=0$ 
and the sequence $\seqn G$ converges thus strongly to $\xi$. In this special case, 
Theorem \ref{th komlos} is nothing more than the classical result of Banach and
Saks \cite[III.3.14]{bible}. In the general case, however, the sequence $\seqn G$ 
need not converge to $\xi$, not even in restriction to a single, fixed set. If one 
assumes that $\A$ is a $\sigma$ algebra and $\lambda$ is countably additive, 
then it becomes possible to replace the sets $A_1,A_2,\ldots$ with 
$A_k^*=\bigcap_{n>k}A_n$ 
and conclude that for each $\varepsilon$ there is $A\in\A$ such that 
$\lambda(A^c)<\varepsilon$ while $G_n$ converges in norm to $\xi$ in restriction 
to $A$. This improvement on the statement of Theorem \ref{th komlos} may also
be obtained upon introducing a form of the independence property valid in the finitely 
additive context. The first step in this direction was made long ago by Purves and 
Sudderth \cite{purves sudderth} relatively to strategies, a notion due to Dubins and 
Savage \cite{dubins savage} and too lengthy to explain here. Briefly put, an 
independent strategy is a finitely additive probability $\lambda$ defined over the
algebra of clopen sets of the product space $X^\N$, with $X$ an arbitrary non void 
set, and satisfying
\begin{equation}
\lambda(H_1\times H_2\times\ldots H_N\times\ldots)
	=
\gamma_1(H_1)\gamma_2(H_2)\ldots\gamma_N(H_N)\ldots
\end{equation}
where $\seqn H$ is a sequence of subsets of $X$ and $\seqn\gamma$
a sequence of finitely additive probabilities defined on all subsets of $X$.
This notion has found a number of applications in replicating finitely additive
theorems on the convergence of random quantities. Given that our interest 
focuses instead on additive functions, we think that the following is a reasonable 
adaptation of that same notion to our setting.

Say that a sequence $\seqn F$ in $ba(\A)$ is independent relatively to 
$\lambda\in ba(\sigma\A)$ if $\lambda\gg F_n$ for $n=1,2,\ldots$ and there
exists a sequence $\seqn\A$ of subalgebras of $\A$ with the property that
(\textit{i}) $\inf_{h\in\Sim(\A_n)}\norm{F_n-\lambda_h}=0$ and (\textit{ii})
for any countable partition $\{N_1,N_2,\ldots\}$ of $\N$ into finite subsets
\begin{equation}
\label{indep}
\lambda\Big(\bigcap_i B_i\Big)
	=
\prod_{i=1}^\infty\lambda(B_i)
\qquad
B_i\in\bigvee_{n\in N_i}\A_n,\ i=1,2,\ldots
\end{equation}

We state the following corollary only for the case of positive set functions.

\begin{corollary}
\label{cor independent}
Let $\seqn F$ be a bounded sequence in $ba(\sigma\A)_+$ which is independent 
relatively to $\lambda\in\Prob(\sigma\A)$ and let $\delta>0$. There exist 
$\xi\in ba(\A)_+$ satisfying 
$\norm{\xi}\ge\sup_k\limsup_n\bnorm{F_n\wedge2^k\lambda}-\delta$, and a 
sequence $\seqn G$ in $\G F$ such that, for each $\varepsilon>0$, there is 
$A_\varepsilon\in\A$ such that
\begin{equation}
\xi(A_\varepsilon^c)<\varepsilon
\quad\text{and}\quad
\lim_n\abs{G_n-\xi}(A_\varepsilon)=0
\end{equation}
\end{corollary}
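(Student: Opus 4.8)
The plan is to bootstrap from Theorem \ref{th komlos} by exploiting the independence hypothesis to upgrade the restriction sets $A_n$ (whose complements are merely summable in $\lambda$) into a single set $A_\varepsilon$ of almost full measure on which the convexified sequence converges in norm. First I would apply Theorem \ref{th komlos} to the sequence $\seqn F$ and $\lambda$, obtaining $\xi\in ba(\lambda)_+$ with the stated lower bound, a sequence $\seqn G$ in $\G F$ with disjoint convex weights $\seqn\alpha$ (so $G_n=\sum_i\alpha_{n,i}F_i$), and sets $\seqn A$ in $\A$ with $\bar G_n=G_{n,A_n}$ satisfying $\norm{\bar G_n-\xi}\to 0$ and $\sum_n\lambda(A_n^c)<\infty$. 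By condition (\textit{i}) in the definition of independence I can approximate each $F_i$ in norm by an element of the form $\lambda_h$ with $h\in\Sim(\A_i)$; pushing this through the convex combinations (and relabelling, since only finitely many indices are involved in each $G_n$), I get that $G_n$ is itself, up to arbitrarily small norm error, of the form $\lambda_{h_n}$ with $h_n$ measurable with respect to $\bigvee_{i\in N_n}\A_i$ for a suitable finite block $N_n\subset\N$; passing to a further subsequence makes the blocks $N_1,N_2,\ldots$ pairwise disjoint, which is exactly the configuration in which \eqref{indep} gives independence.

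The key step is then a Borel--Cantelli-type argument. Since $\sum_n\lambda(A_n^c)<\infty$, the sets $A_n^c$ are "rare", and independence of the relevant blocks lets me control $\lambda(\bigcap_{n>k}A_n)$ from below: indeed $\lambda\big(\bigcap_{n>k}A_n\big)=\prod_{n>k}\lambda(A_n)=\prod_{n>k}(1-\lambda(A_n^c))$, and since $\sum_n\lambda(A_n^c)<\infty$ this infinite product converges and tends to $1$ as $k\to\infty$. (One must be a little careful: the $A_n$ should be taken in $\bigvee_{i\in N_n}\A_i$, which is legitimate because the approximating step lets us choose the $A_n$ witnessing $\norm{G_n\wedge 2^n\lambda}$ from the sub-algebra on which $G_n$ essentially lives; alternatively one notes $\lambda$ is countably additive on $\sigma\A$ so $\lambda(\bigcap_{n>k}A_n)=\lim_m\lambda(\bigcap_{k<n\le m}A_n)\ge\lim_m\prod_{k<n\le m}(1-\lambda(A_n^c))$.) Then I set $A_\varepsilon=A_k^*:=\bigcap_{n>k}A_n$ for $k$ large enough that $1-\prod_{n>k}(1-\lambda(A_n^c))<\varepsilon/\norm{\lambda}$, so that $\xi(A_\varepsilon^c)\le\lambda(A_\varepsilon^c)<\varepsilon$ (using $\xi\ll\lambda$ and $\norm\xi\le\norm\lambda$, or more carefully using uniform absolute continuity).

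On $A_\varepsilon=A_k^*$ the convergence is then automatic for the tail: for $n>k$ we have $A_\varepsilon\subset A_n$, hence $G_{n,A_\varepsilon}=(G_{n,A_n})_{A_\varepsilon}=\bar G_{n,A_\varepsilon}$, and therefore
\begin{equation*}
\abs{G_n-\xi}(A_\varepsilon)
	\le
\norm{\bar G_n-\xi}
	\tto
0
\end{equation*}
so that $\lim_n\abs{G_n-\xi}(A_\varepsilon)=0$. Finally, since $\xi$ depends on $\varepsilon$ only through the choice of $k$, but Theorem \ref{th komlos} produced $\xi$ and $\seqn G$ once and for all before $\varepsilon$ was chosen, the same $\xi$ and $\seqn G$ work for every $\varepsilon>0$ with only the set $A_\varepsilon$ varying, which is what the statement requires.

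The main obstacle I anticipate is bookkeeping rather than conceptual: ensuring that the sets $A_n$ can indeed be chosen inside the block algebras $\bigvee_{i\in N_i}\A_i$ so that \eqref{indep} applies, and more delicately, controlling the error introduced by replacing $F_i$ with $\lambda_{h_i}$ — this approximation is only in total variation norm, and one needs it to survive both the $\wedge 2^n\lambda$ truncation (harmless by \eqref{lattice b}) and the passage to the limit defining $\xi$. A clean way around the measurability issue is to observe that whatever $A_n$ witnesses $\norm{G_n\wedge 2^n\lambda}$ up to $2^{-n}$, one can replace it by an element of the block algebra losing only $\lambda$-mass controlled by the approximation error, which is again summable; the numbers can be arranged so that the perturbed sets still have $\lambda$-summable complements and still lie in the independent blocks. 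Once that is set up, the product-of-probabilities estimate and the Borel--Cantelli conclusion are routine.
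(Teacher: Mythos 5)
Your proposal is correct and follows essentially the same route as the paper: the paper makes your ``bookkeeping'' step explicit by setting $g_n=\sum_i\alpha_{n,i}f_i$ with $\lambda_{f_i}$ approximating $F_i$ in norm, taking $A_n=\{g_n\le 2^n\}$ (a set which automatically lies in the block algebra $\bigvee_{i\in N_n}\A_i$), checking by a short computation that this level set still witnesses $\norm{G_n\wedge 2^n\lambda}$ up to $2^{-n}$, and then applying \eqref{indep} to the tail intersection exactly as you do. The only slip is your parenthetical fallback invoking countable additivity of $\lambda$ on $\sigma\A$ --- here $\lambda\in\Prob(\sigma\A)$ is only finitely additive, so that alternative is unavailable and the block-algebra route is the one that must carry the argument.
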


\begin{proof}
Choose $f_n\in\Sim(\A_n)$ such that $\norm{F_n-\lambda_{f_n}}<2^{-n-1}$. 
Let $\seqn G$ be the sequence in \eqref{komlos} with $G_n=\sum_i\alpha_{n,i}F_i$ 
and write $g_n=\sum_i\alpha_{n,i}f_i$. Choose the sequence $\seqn\alpha$
to be disjoint. Observe that $\norm{G_n-\lambda_{g_n}}\le2^{-n-1}$. 
For each $A\in\A$
\begin{align*}
G_n(A)+2^n\lambda(A^c)
	&\ge
-2^{-n-1}
+\lambda_{g_n}(A)+2^n\lambda(A^c)\\
	&\ge
-2^{-n-1}+\lambda(g_n\wedge2^n)\\
	&=
-2^{-n-1}+\lambda_{g_n}\big(g_n\le2^n\big)+2^n\lambda\big(g_n>2^n\big)\\
	&\ge
-2^{-n}+G_n\big(g_n\le2^n\big)+2^n\lambda\big(g_n>2^n\big)
\end{align*}
so that 
$G_n(g_n\le2^n)+2^n\lambda(g_n>2^n)
	\le
2^{-n}+\norm{G_n\wedge2^n\lambda}$.
Thus we can replace $A_n$ with $\{g_n\le2^n\}$ in Theorem 
\ref{th komlos} and obtain
\begin{align*}
\sum_{n>N}\lambda\big(g_n>2^n\big)\le2^{-N}\Big(1+\sup_n\norm{F_n}\Big)
\end{align*}
Given that the sets $N_n=\{i:\alpha_i^n\ne0\}$ are disjoint and that
$g_n\in\Sim\big(\bigvee_{i\in N_n}\A_i\big)$ we conclude, following the
 classical proof of the Borel-Cantelli lemma,
\begin{align*}
\lambda\bigg(\bigcap_{n>N}\{g_n\le2^n\}\bigg)
	=
\prod_{n>N}\lambda(g_n\le2^n)
	\ge 
1-\sum_{n>N}\lambda\big(g_n>2^n\big)
	\ge
1-2^{-N}\Big(1+\sup_n\norm{F_n}\Big)
\end{align*}
i.e. $\lim_N\lambda\big(\bigcap_{n>N}\{g_n\le2^n\}\big)=1$ and, by absolute
continuity, $\lim_N\xi\big(\bigcup_{n>N}\{g_n>2^n\}\big)=0$. One can then fix 
$A_\varepsilon=\bigcap_{n>N}\{g_n\le2^n\}\in\A$ with $N$ sufficiently large. 
The claim follows from \eqref{komlos}.
\end{proof}

(2). Notice the special case in which $F_n=\int f_nd\lambda$ with $\seqn f$ a 
bounded sequence in $L^1(\lambda)$ so that $G_n$ takes the form 
$G_n=\int g_nd\lambda$ for some $g_n\in\co(f_n,f_{n+1},\ldots)$. Then, with 
the notation of Corollary  \ref{cor komlos sign},
\begin{align*}
\lambda^*(\abs{g_n-g_m}>c)
	&\le
\lambda^*(\abs{g_n-g_m}\set{A_n\cap A_m}>c)
	+
\lambda(A_n^c\cup A_m^c)\\
	&\le
c^{-1}\int_{A_n\cap A_m}\abs{g_n-g_m}d\lambda+\lambda(A_n^c\cup A_m^c)\\
	&\le
c^{-1}(\norm{\bar G_{n,A_m}-\xi}+\norm{\bar G_{m,A_n}-\xi})
	+
\lambda(A_n^c\cup A_m^c)
\end{align*}
so that the sequence $\seqn g$ is $\lambda$-Cauchy, a claim proved in
\cite[Theorem 6.3]{BK} for the case $f_n\ge0$. If, in addition, $\lambda$
is countably additive, then by completeness we obtain that $g_n$ 
$\lambda$-converges to some limit $h\in L^1(\lambda)$ or even
converges a.s., upon passing to a subsequence. This is the form in which 
the subsequence principle attributed to Koml\'os is often stated in applications. 

(3). 
In a possible interpretation of Theorem \ref{th komlos}, one may take $F_n$
to be an expression of the disagreement $\abs{F^1_n-F^2_n}$ between two
different opinions. Theorem \ref{th komlos} suggests that either disagreement 
is progressively smoothed out, in accordance with condition \eqref{orth asy}, or 
that it converges to some final divergence of opinions. It would be interesting
to see if this result may be useful to get more insight in the classical problem
of merging of opinions described in the well known paper of Blackwell and Dubins 
\cite{blackwell dubins}.

We close this section with two generalizations of Theorem \ref{th komlos}.
In the first we drop the norm boundedness condition; in the second one we 
consider vector valued set function.

\begin{theorem}
\label{th komlos unbounded}
Let $\seqn F$ be a sequence in $ba(\A)_+$ and $\lambda\in\Prob(\A)$.
There exist 
$\xi:\A\to\R_+\cup\{\infty\}$ finitely additive and 
a sequence $\seqn G$ in $\G F$ such that
\begin{equation}
\lim_n\babs{G_n\wedge2^n\lambda-\xi}(A)=0
\qquad
\text{for each }A\in\A\text{ with }\xi(A)<\infty
\end{equation}
\end{theorem}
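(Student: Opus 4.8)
The plan is to reduce the unbounded case to the bounded one via a truncation-and-diagonalization argument. First I would fix the reference measure $\lambda\in\Prob(\A)$ and, for each fixed level $m\in\N$, consider the norm bounded sequence $\sseqn{F_n\wedge m\lambda}$ in $ba(\A)_+$. Applying Theorem \ref{th komlos} (or rather the constructive heart of it, namely Theorem \ref{th komlos lattice}) to the truncated sequences and diagonalizing over $m$, I would extract a single sequence $\seqn G$ in $\G F$ which simultaneously works for all truncation levels: that is, $G_n\wedge m\lambda$ converges weakly (and, after intersecting with suitable sets $A_n$, in norm up to the Borel--Cantelli correction) to some $\xi^{(m)}\in ba(\lambda)_+$ for every $m$. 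The key point making the diagonalization legitimate is that passing to a subsequence of a sequence in $\G F$ (or forming further convex combinations) stays in $\G F$, and that the weak convergence in \eqref{komlos lattice} is inherited by subsequences — exactly the delicate feature stressed after Theorem \ref{th komlos lattice}.

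Next I would build the limit $\xi$. The truncated limits $\xi^{(m)}$ are increasing in $m$ (since $F_n\wedge m\lambda\le F_n\wedge m'\lambda$ for $m\le m'$ passes to the limit) and each is dominated by $m\lambda$, but they need not be norm bounded as $m\to\infty$. So I would define $\xi(A)=\sup_m\xi^{(m)}(A)=\lim_m\xi^{(m)}(A)\in\R_+\cup\{\infty\}$ for $A\in\A$. Finite additivity of $\xi$ follows from finite additivity of each $\xi^{(m)}$ together with the monotone passage to the supremum over a totally ordered family of values: for disjoint $A,B\in\A$, $\xi(A\cup B)=\lim_m\xi^{(m)}(A\cup B)=\lim_m(\xi^{(m)}(A)+\xi^{(m)}(B))=\xi(A)+\xi(B)$, using that at least one of $\xi(A),\xi(B)$ may be $\infty$ with the usual conventions. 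On the set $\{A\in\A:\xi(A)<\infty\}$ this is a genuine finite sum.

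Then I would establish the convergence statement on sets $A$ with $\xi(A)<\infty$. Fix such an $A$. Given $\varepsilon>0$, pick $m$ large enough that $\xi(A)-\xi^{(m)}(A)<\varepsilon$. For $n\ge m$ one has $G_n\wedge 2^n\lambda\ge G_n\wedge m\lambda$, and on $A$ the gap between $G_n\wedge 2^n\lambda$ and $G_n\wedge m\lambda$ is controlled by $(G_n-m\lambda)^+(A)$, which I would bound using the restriction-to-$A_n$ mechanism from Theorem \ref{th komlos} together with $\xi^{(m')}(A)-\xi^{(m)}(A)\to 0$ as $m'\to\infty$; combining this with $\lim_n\babs{G_n\wedge m\lambda-\xi^{(m)}}(A)=0$ (the level-$m$ conclusion) and $\babs{\xi^{(m)}-\xi}(A)<\varepsilon$ yields $\limsup_n\babs{G_n\wedge 2^n\lambda-\xi}(A)\le C\varepsilon$ for a constant $C$ not depending on $\varepsilon$. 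Letting $\varepsilon\downarrow0$ gives the claim.

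The main obstacle I anticipate is the diagonalization bookkeeping: I must guarantee that the \emph{single} sequence $\seqn G$, obtained as a diagonal convex combination, still lies in $\G F$ and still enjoys, for \emph{every} truncation level $m$, the weak-convergence-to-$\xi^{(m)}$ property — because forming a further convex combination of a sequence that converges weakly to $\xi^{(m)}$ only preserves the limit if the limit is the same along the whole tail, which is precisely what \eqref{komlos lattice} provides but which must be invoked carefully and uniformly in $m$. A secondary subtlety is that the truncation cutoff in the conclusion is the diagonal $2^n\lambda$ rather than a fixed $m\lambda$, so the Borel--Cantelli-type control of the exceptional sets $A_n^c$ (as in \eqref{lim xi}) must be carried through the diagonalization, ensuring $\sum_n\lambda(A_n^c)<\infty$ survives; once that is in place, absolute continuity of each $\xi^{(m)}$ with respect to $\lambda$ transfers the set-restriction estimates exactly as in the proof of Theorem \ref{th komlos}.
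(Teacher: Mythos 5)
There is a genuine gap at the heart of your reduction: truncation does not commute with taking convex combinations. If you apply Theorem \ref{th komlos} (or Theorem \ref{th komlos lattice}) to the norm bounded sequence $\sseqn{F_n\wedge m\lambda}$, what you obtain are convex combinations $\sum_i\alpha_{n,i}(F_i\wedge m\lambda)$ of the \emph{truncated} set functions, and in general
$\sum_i\alpha_{n,i}(F_i\wedge m\lambda)\le\big(\sum_i\alpha_{n,i}F_i\big)\wedge m\lambda$
with strict inequality possible (take two mutually singular $F_i$ of norm much larger than $m$). So convergence of the former tells you nothing about convergence of $G_n\wedge m\lambda$ for $G_n=\sum_i\alpha_{n,i}F_i\in\G F$, which is the object the conclusion requires; your sequence is not in $\G F$ to begin with. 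No amount of diagonalization over $m$ repairs this, because the defect is already present at a single fixed level. The simultaneity over all truncation levels cannot be manufactured level by level; it is exactly the ``delicate property'' stressed after Theorem \ref{th komlos lattice}, and it is obtained there in one stroke by the Zorn's lemma construction of a maximal increasing sequence $\seqn\xi$ satisfying the compatibility relation \eqref{restr}, $\xi_n\wedge2^k\lambda=\xi_k$ for $n\ge k$. Since Theorem \ref{th komlos lattice} does not assume norm boundedness (the paper remarks this explicitly right after its proof), the correct route is to apply it \emph{once} to the original sequence $\seqn F$ with $z=\lambda$: this already yields a single $\seqn G$ in $\G F$ with $\lim_n\bnorm{G_n\wedge2^k\lambda-\xi_k}=0$ for every $k$, using, as in Theorem \ref{th komlos}, that a positive weakly null sequence in $ba(\A)$ is norm null.

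A secondary problem is your closing estimate. The conclusion of this theorem involves no restriction sets $A_n$ and no Borel--Cantelli correction, so importing the ``restriction-to-$A_n$ mechanism'' is both unnecessary and insufficient: your proposed bound on the gap via $(G_n-m\lambda)^+(A)$ is never actually closed. The clean argument is: pass to a subsequence so that $\bnorm{G_n\wedge2^n\lambda-\xi_n}<2^{-n}$ (a harmless diagonalization, since a subsequence of $\seqn G$ stays in $\G F$); then for $A\in\A$ with $\xi(A)<\infty$, since $\xi\ge\xi_n\ge0$ one has $\abs{\xi-\xi_n}(A)=\xi(A)-\xi_n(A)$, whence
$\babs{G_n\wedge2^n\lambda-\xi}(A)\le2^{-n}+\xi(A)-\xi_n(A)\to0$.
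Your definition of $\xi$ as the monotone limit of the truncated limits and the verification of its finite additivity are fine and agree with the paper.
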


\begin{proof}
Given that Theorem \ref{th komlos lattice} does not require norm boundedness,
the sequence $\seq\xi k$ is obtained exactly as in the proof of Theorem 
\ref{th komlos}. Define the extended real valued, finitely additive set 
function
\begin{equation}
\xi(A)=\lim_k\xi_k(A)
\qquad
A\in\A
\end{equation}
and notice that \eqref{Tkl} holds so that the sequence $\seqn G$ may be 
chosen in such a way that $\norm{G_n\wedge2^n\lambda-\xi_n}<2^{-n}$.
Thus, if $A\in\A$ and $\xi(A)<\infty$,
\begin{align*}
\lim_n\babs{\xi-G_n\wedge2^n\lambda}(A)
	=
\lim_n\abs{\xi-\xi_n}(A)
	=
\lim_n\sup_{\pi\in\Pi(\A)}\sum_{B\in\pi}\abs{(\xi-\xi_n)(A\cap B)}
	=
\lim_n(\xi-\xi_n)(A)
	=0
\end{align*}
\end{proof}

Spaces such as $L^1$ or $ba$ have the special property that a sequence
of positive elements converges weakly to $0$ if and only if it converges in 
norm too. For this special spaces we obtain the following:

\begin{theorem}
\label{th komlos vector}
Let $(W,\Bor,P)$ be a classical probability space -- i.e. $W$ non empty, $\Sigma$
a $\sigma$ algebra of subsets of $W$ and $P$ $\sigma$ additive -- and set 
$X=L^1(W,\Bor,P)$. Let $\seqn F$, a norm bonded sequence in $ba_0(\A,X)_+$, 
and $\lambda\in\Prob(\A)$ be such that the set
\begin{equation}
\label{bdd}
\mathcal R
	=
\co\Big\{\sup_{A\in\pi}F_n(A)/\lambda(A):n\in\N,\ \pi\in\Pi(\A)\Big\}
\end{equation}
is $P$-bounded. There exist $\xi\in ba_0(\A,X)_+$ and $\seqn G$ in 
$\G F$ such that
\begin{equation}
\label{komlos vector}
\lim_n\babs{G_n-\xi}(\Omega)=0
\qquad
P-a.s.
\end{equation}
\end{theorem}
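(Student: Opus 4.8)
The plan is to reduce Theorem~\ref{th komlos vector} to the scalar Theorem~\ref{th komlos} applied \emph{pointwise in} $w\in W$, using the special feature of $X=L^1(P)$ that, for positive elements, weak convergence coincides with norm convergence. First I would use the hypothesis that $\mathcal R$ is $P$-bounded together with the definition~\eqref{ba0} to observe that $\sup_{A\in\pi}F_n(A)/\lambda(A)$ stays in a norm-bounded subset of $L^1(P)$; since $ba_0(\A,X)$ is a Banach lattice with property~\PP by Lemma~\ref{lemma P}, I can apply Theorem~\ref{th komlos lattice} with $z$ chosen appropriately (say $z(A)=\lambda(A)\mathbf 1_W$, the set function whose value is the constant function $\lambda(A)$). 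This produces $\seqn G$ in $\G F$, a dominated sequence $\seqn\zeta$ with $\zeta_n\le G_n$, and an increasing sequence $\seq\xi k$ in $ba_0(\A,X)_+$ such that $G_n\wedge 2^k z$ converges weakly to $\xi_k$ while $\norm{\zeta_n\wedge 2^k z-\xi_k}\to0$.

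The crucial step is to upgrade the weak convergence $G_n\wedge 2^kz\xrightarrow{w}\xi_k$ to norm convergence. Here I would invoke the sandwich $\zeta_n\wedge 2^kz\le G_n\wedge 2^kz$ together with the fact that both the lower bound and the upper truncation converge weakly to the \emph{same} limit $\xi_k$: the difference $G_n\wedge2^kz-\zeta_n\wedge2^kz$ is a positive element of $ba_0(\A,X)$ converging weakly to $0$, hence (since $ba_0(\A,X)$ inherits from $L^1(P)$ the property that positive weakly null sequences are norm null -- this is where $P$ being $\sigma$-additive and $X=L^1$ is used) it converges to $0$ in the $ba_0$-norm. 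Combined with $\norm{\zeta_n\wedge2^kz-\xi_k}\to0$ this gives $\norm{G_n\wedge2^kz-\xi_k}\to0$ for each $k$. Then, exactly as in the proof of Theorem~\ref{th komlos}, since $\seq\xi k$ is increasing and norm bounded, property~\PP yields a norm limit $\xi\in ba_0(\A,X)_+$, and after passing to a subsequence one obtains $\norm{G_n\wedge2^nz-\xi}\to0$; finally the $P$-boundedness of $\mathcal R$ forces $\sup_n\norm{G_n-G_n\wedge2^kz}_{ba_0}\to0$ as $k\to\infty$, so that in fact $\norm{G_n-\xi}_{ba_0}\to0$.

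It remains to pass from $ba_0$-norm convergence to the pointwise-a.s.\ statement~\eqref{komlos vector}. By definition~\eqref{ba0}, $\norm{G_n-\xi}_{ba_0}=\bnorm{\sum_{A\in\pi}\abs{(G_n-\xi)(A)}}_{L^1(P)}$ supremized over $\pi$; evaluating the partition-sup inside and taking $\pi$ to realize the lattice supremum defining $\abs{G_n-\xi}$, this equals $\bnorm{\abs{G_n-\xi}(\Omega)}_{L^1(P)}$, i.e. $\int_W\abs{G_n-\xi}(\Omega)(w)\,P(dw)\to0$. Thus $\abs{G_n-\xi}(\Omega)\to0$ in $L^1(P)$, and upon extracting a subsequence the convergence is $P$-a.s., which is~\eqref{komlos vector}.

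The main obstacle I anticipate is the second step: justifying cleanly that a positive, weakly null sequence in $ba_0(\A,X)$ is norm null when $X=L^1(P)$. One has to be careful because the $ba_0$-norm~\eqref{ba0} is neither the variation nor the semivariation norm, so the classical Dunford--Pettis/uniform-integrability machinery cannot be quoted verbatim; the argument must go through the identity $\norm F_{ba_0}=\bnorm{\abs F(\Omega)}_X$ established in Lemma~\ref{lemma P}, reducing the question to: if $\abs{G_n-\zeta_n\wedge2^kz}(\Omega)\to0$ weakly in $L^1(P)$ and these are positive, does it converge in $L^1(P)$-norm? That reduction to a statement purely inside $L^1(P)$ is the heart of the matter, and it is exactly the ``special property'' the authors flag in the sentence preceding the theorem.
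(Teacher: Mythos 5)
Your first two steps coincide with the paper's: apply Theorem~\ref{th komlos lattice} in $X$-valued form, and use the fact that in $L^1(P)$ (hence, via the identity $\norm F_{ba_0(\A,X)}=\norm{\abs F(\Omega)}_X$ from Lemma~\ref{lemma P}, in $ba_0(\A,X)$) a positive weakly null sequence is norm null, so that $\norm{G_n\wedge2^n\lambda-\xi}\to0$. Up to there the proposal is sound and is exactly what the paper does.

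The gap is in your third step, where you assert that the $P$-boundedness of $\mathcal R$ forces $\sup_n\norm{G_n-G_n\wedge2^k\lambda}_{ba_0}\to0$ and hence $\norm{G_n-\xi}_{ba_0}\to0$. $P$-boundedness means boundedness in the topology of convergence in $P$-measure, i.e. $\lim_{c\to\infty}\sup_{f\in\mathcal R}P(f>c)=0$; it is \emph{not} uniform integrability, and it gives no control on $\int_{B^c}G_n(\Omega)\,dP$ over the small sets $B^c$ where the truncation bites. The quantity $\norm{G_n-G_n\wedge2^n\lambda}_{ba_0}$ is precisely such an integral of an $L^1$-bounded (but not uniformly integrable) family over a set of small measure, so it need not vanish. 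Indeed, if your claim were true the theorem would yield convergence of $\abs{G_n-\xi}(\Omega)$ in $L^1(P)$-norm, which is strictly stronger than the stated $P$-a.s. conclusion, and would contradict the paper's own remark (comment (1) after Theorem~\ref{th komlos}) that without relative weak compactness the convex combinations $G_n$ need not converge in norm. What the hypothesis on $\mathcal R$ actually buys is the following, and this is the missing idea: choosing partitions $\pi_n$ realizing $G_n\wedge2^n\lambda$ up to $2^{-n}$, the truncation is inactive on the set $B_n=\{\sup_{A\in\pi_n}G_n(A)/\lambda(A)\le2^n\}\subset W$, and since $\sup_{A\in\pi_n}G_n(A)/\lambda(A)$ is dominated by an element of $\mathcal R$ one gets $P(B_n)\to1$. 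Passing to a subsequence with $\sum_nP(B_n^c)<\infty$ and setting $H_k=\bigcap_{n>N_k}B_n$, one obtains $\int_{H_k}\abs{G_n-\xi}(\Omega)\,dP\to0$, hence a.s. convergence on each $H_k$ along a further subsequence, and finally on $H=\bigcup_kH_k$ with $P(H)=1$. Your final reduction from norm convergence to a.s. convergence is therefore salvageable only in this localized form, not globally.
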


\begin{proof}
Given that $X$ is a Banach lattice possessing property \PP and that weak and
strong convergence to $0$ are equivalent properties for positive sequences in
$X$, we deduce from Theorem \ref{th komlos lattice} that there exists 
$\xi\in ba_0(\A,X)_+$ and a sequence $\seqn G$ in $\G F$ such that
\begin{equation}
\lim_n\bnorm{G_n\wedge2^n\lambda-\xi}=0
\end{equation}
Observe that for each $A\in\A$,
\begin{align*}
(G_n\wedge2^n\lambda)(A)
	=
\lim_\pi\sum_{A'\in\pi}G_n(A\cap A')\wedge2^n\lambda(A\cap A')
\end{align*}
and, since the net on the right hand side is decreasing with $\pi$, 
there exists $\pi_n=\{A_n^1,\ldots,A_n^{I_n}\}\in\Pi(\A)$ such that 
\begin{align*}
(G_n\wedge2^n\lambda)(A)
	\le
\sum_{i=1}^{I_n}G_n(A\cap A_n^i)\wedge2^n\lambda(A\cap A_n^i)
	\le
\sum_{i=1}^{I_n}G_n(A\cap A_n^i)\set{B_n^i}+2^n\lambda(A\cap A_n^i)\set{B_n^{ic}}
	\equiv
\bar G_n(A)
\end{align*}
(the last equality being a definition of $\bar G_n\in ba_0(\A,X)$) where 
\begin{equation*}
B_n^i=\big\{G_n(A_n^i)\le2^n\lambda(A_n^i)\big\}
\end{equation*}
On the other hand,
\begin{align*}
\norm{\bar G_n-G_n\wedge2^n\lambda}
	&=
\norm{\bar G_n(\Omega)}_X-\norm{(G_n\wedge2^n\lambda)(\Omega)}_X\\
	&=
\int{\sum_{i=1}^{I_n}G_n(A_n^i)\wedge2^n\lambda(A_n^i)}dP
-
\lim_\pi\int{\sum_{A\in\pi}G_n(A)\wedge2^n\lambda(A)}dP
\end{align*}
so that $\pi_n$ may be so chosen that 
$\norm{\bar G_n-G_n\wedge2^n\lambda}\le2^{-n}$.
We conclude, $\lim_n\norm{\bar G_n-\xi}=0$. Observe that, with the above notation,
\begin{equation*}
B_n
	=
\bigcap_{i=1}^{I_N}B_n^i
	=
\Big\{\sup_{A\in\pi_n}G_n(A)/\lambda(A)\le2^n\Big\}
\end{equation*}
Given that $\sup_{A\in\pi_n}G_n(A)/\lambda(A)\in\mathcal R$ then, by assumption, 
$\lim_nP(B_n)=1$. Passing to a subsequence (still indexed by $n$) we obtain that 
$\sum_nP(B_n^c)<\infty$ and therefore that for each $k>0$ there exists $N_k>N_{k-1}$ 
such that 
\begin{equation}
P\Big(\bigcap_{n>N_k}B_n\Big)>1-\varepsilon
\end{equation}
Let $H_k=\bigcap_{n>N_k}B_n$.
\begin{align*}
\int_{H_k}\abs{G_n-\xi}(\Omega)dP
	&=
\lim_\pi\int_{H_k}\sum_{A\in\pi}\abs{\bar G_n(A_n)-\xi(A_n)}dP
	\le
\norm{\bar G_n-G_n}+\norm{G_n\wedge2^n\lambda-\xi}
\end{align*}
We obtain a subsequence such that $\abs{G_n-\xi}(\Omega)$ converges
to $0$ $P$ a.s. on $H_k$ for each $k$. Given that the sequence $\seq Hk$
is increasing, we conclude that the sequence convergence pointwise
on $H=\bigcup_kH_k$ and that $P(H)=1$.
\end{proof}

\section{Further Applications}

A first, simple application of Theorem \ref{th komlos} is the following:

\begin{corollary}
Let $\seqn\mu$ be a norm bounded sequence in $ba(\A)$ and let
$K_n\subset K_{n+1}\cap L^1(\mu_n)$ for $n=1,2,\ldots$.
Write $K=\bigcup_nK_n$ and assume that
\begin{equation}
\sup_k\limsup_n\bnorm{\abs{\mu_n}\wedge\abs{\mu_k}}>0
\qquad\text{and}\qquad
\limsup_n\norm{f}_{L^1(\mu_n)}<\infty
\qquad 
f\in K
\end{equation}
Then there is $\mu\in\Prob(\A)$ such that $K\subset L^1(\mu)$.
\end{corollary}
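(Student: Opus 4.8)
The plan is to apply Theorem~\ref{th komlos} to the norm bounded sequence $F_n=\abs{\mu_n}$ in $ba(\A)_+$, choosing the reference probability $\lambda$ so that the first hypothesis forces the limit produced by the theorem to be nonzero, and then to take for $\mu$ a normalisation of that limit. First I would use $\sup_k\limsup_n\bnorm{\abs{\mu_n}\wedge\abs{\mu_k}}>0$ to fix an index $k_0$ with $c:=\limsup_n\bnorm{\abs{\mu_n}\wedge\abs{\mu_{k_0}}}>0$; then $\mu_{k_0}\ne0$, so $\lambda:=\abs{\mu_{k_0}}/\norm{\mu_{k_0}}$ lies in $\Prob(\A)$. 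For every $k$ with $2^k\ge\norm{\mu_{k_0}}$ one has $2^k\lambda\ge\abs{\mu_{k_0}}$, hence $\abs{\mu_n}\wedge2^k\lambda\ge\abs{\mu_n}\wedge\abs{\mu_{k_0}}$, and monotonicity of the total variation norm on $ba(\A)_+$ gives $\limsup_n\bnorm{\abs{\mu_n}\wedge2^k\lambda}\ge c$; thus $\sup_k\limsup_n\bnorm{\abs{\mu_n}\wedge2^k\lambda}\ge c>0$.

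Next I would invoke Theorem~\ref{th komlos} with this $\lambda$ and $\delta=c/2$: it furnishes $\xi\in ba(\lambda)_+$ with $\norm\xi\ge c/2>0$, a sequence $\seqn G$ in $\G F$ --- so that each $G_n=\sum_i\alpha_{n,i}\abs{\mu_i}$ is a finite convex combination with $\alpha_{n,i}=0$ whenever $i<n$ --- and $\seqn A$ in $\A$ with $\sum_n\lambda(A_n^c)<\infty$, such that $\lim_n\bnorm{G_{n,A_n}-\xi}=0$. I would then put $\mu=\xi/\norm\xi\in\Prob(\A)$; since $L^1(\mu)=L^1(\xi)$, it remains only to prove $K\subset L^1(\xi)$.

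To that end, fix $f\in K$, say $f\in K_m$. By the inclusions $K_m\subset K_n\subset L^1(\mu_n)$ for $n\ge m$, the function $f$ is $\abs{\mu_i}$-integrable for every $i\ge m$, and $M:=\limsup_n\norm f_{L^1(\mu_n)}<\infty$ by hypothesis. For $n\ge m$ the set function $G_n$ is a convex combination of the $\abs{\mu_i}$ with $i\ge n\ge m$, so $f\in L^1(G_n)$, and, since $0\le G_{n,A_n}\le G_n$,
\[
\int\abs f\,dG_{n,A_n}\le\int\abs f\,dG_n=\sum_i\alpha_{n,i}\int\abs f\,d\abs{\mu_i}\le\sup_{i\ge n}\int\abs f\,d\abs{\mu_i},
\]
a quantity that decreases to $M$ as $n\to\infty$; hence $\limsup_n\int\abs f\,dG_{n,A_n}\le M$. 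Now for any simple $s\in\Sim(\A)$ with $0\le s\le\abs f$ one has $\int s\,d\xi=\lim_n\int s\,dG_{n,A_n}$ (because $\babs{\int s\,d\nu}\le\norm s_\infty\norm\nu$ for $\nu\in ba(\A)$ and $\norm{G_{n,A_n}-\xi}\to0$), while $\int s\,dG_{n,A_n}\le\int\abs f\,dG_{n,A_n}$; therefore $\int s\,d\xi\le M$. Taking the supremum over such $s$ gives $\int\abs f\,d\xi\le M<\infty$, i.e. $f\in L^1(\xi)$, and the proof is complete.

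The step I expect to be the main obstacle is the last one, where one must know that $f$ is $\xi$-measurable so that $\int\abs f\,d\xi=\sup\{\int s\,d\xi:s\in\Sim(\A),\ 0\le s\le\abs f\}$ is the correct criterion for membership in $L^1(\xi)$; in the finitely additive setting this requires a little care with the relevant null ideals. The useful fact here is that $G_{n,A_n}\le G_n$ forces $G_{n,A_n}(B)=0$ for every $n\ge N$ as soon as $\abs{\mu_i}(B)=0$ for all $i\ge N$, so that $\xi$ is absolutely continuous with respect to $\sum_{i\ge N}2^{-i}\abs{\mu_i}$ for every $N$ --- precisely the regime in which $f$ is measurable --- and the conclusion then follows from the standard theory of integration with respect to bounded additive set functions \cite{bible}. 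Beyond this, everything is routine: the disjointness of the convex weights is not needed, and the second hypothesis enters only through the finiteness of $M$.
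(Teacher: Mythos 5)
Your overall architecture is the same as the paper's: apply Theorem \ref{th komlos} to the sequence $\abs{\mu_n}$, arrange for the limit $\xi$ to be nonzero, normalise it to obtain $\mu$, and then push measurability and the integral bound for each $f\in K$ through the convex combinations. Your explicit choice $\lambda=\abs{\mu_{k_0}}/\norm{\mu_{k_0}}$ combined with the lower bound \eqref{lower bound} is a clean, concrete way to secure $\norm\xi>0$ (the paper leaves this to the asymptotic--orthogonality clause), and your estimate $\limsup_n\int\abs f\,dG_n\le\limsup_n\norm{f}_{L^1(\mu_n)}$ is exactly the bound the paper uses.

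The gap is in the step you yourself flag: $\xi$-measurability of $f$. The fix you propose --- that $\xi$ vanishes on sets which are null for $\sum_{i\ge N}2^{-i}\abs{\mu_i}$ --- is not sufficient in the finitely additive setting: vanishing on null sets does not yield the $\varepsilon$--$\delta$ form of absolute continuity, and it is the latter that is needed to transfer convergence in outer measure of a sequence of simple functions from $\sum_{i\ge N}2^{-i}\abs{\mu_i}$ to $\xi$ (the two notions coincide for countably additive measures on a $\sigma$-algebra, not for general elements of $ba(\A)$). The repair uses the norm convergence you already have rather than a comparison of null ideals: since $G_{n,A_n}\le G_n$, for every $B\subset\Omega$ one has $\xi^*(B)\le G_n^*(B)+\norm{\xi-G_{n,A_n}}$; choosing simple $s_n$ with $G_n^*(\abs{f-s_n}>2^{-n})\le2^{-n}$ (possible because $f\in L^1(G_n)$ for large $n$) gives $\xi^*(\abs{f-s_n}>2^{-n})\to0$, hence $f$ is $\xi$-measurable. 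This is precisely the paper's argument, with $m_n\wedge2^n\lambda$ in place of $G_{n,A_n}$. Relatedly, for the integral I would avoid the supremum over simple minorants $0\le s\le\abs f$ --- producing simple functions genuinely below $f$ that exhaust the integral is itself delicate under finite additivity --- and instead bound the truncations: once measurability is established, $\abs f\wedge k\in L^1(\xi)$ and $\int(\abs f\wedge k)\,d\xi=\lim_n\int(\abs f\wedge k)\,dG_{n,A_n}\le\limsup_n\int\abs f\,dG_n\le M$, so the truncated integrals are increasing and bounded, and completeness of $L^1(\xi)$ then yields $f\in L^1(\xi)$ with $\int\abs f\,d\xi\le M$. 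With these two adjustments your argument coincides with the paper's proof.
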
	

\begin{proof}
The statement remains unchanged if we replace $\mu_n$ with $\abs{\mu_n}$
so that we can assume with no loss of generality that $\mu_n\ge0$ for all $n\in\N$. 
Theorem \ref{th komlos} delivers the existence of $\bar\mu\in ba(\A)_+$ 
such that $\norm{\bar\mu}>0$ and $\lim_n\norm{\bar\mu-m_n\wedge2^n\lambda}=0$ 
for some $\lambda\in ba(\A)_+$ and $\seqn m$ in $\G\mu$. Write 
$\mu=\bar\mu/\norm{\bar\mu}\in\Prob(\A)$. Let $f\in K$ and for each $n\in\N$ 
sufficiently large, let $\seq{f^n}{j}$ be a sequence of $\A$ simple functions 
such that $m_n^*(\abs{f-f^n_j}>2^{-j})\le2^{-j}$. Then, 
$\mu^*(\abs{f-f^n_n}>2^{-n})
	\le
[2^{-n}+\norm{\bar\mu-m_n\wedge2^n\lambda}]/\norm{\bar\mu}$, 
which proves that $f$ is $\mu$ measurable. Moreover,
$$
\norm{\bar\mu}\int\abs fd\mu
	=
\lim_k\int(\abs f\wedge k)d\bar\mu
	=
\lim_k\lim_n\int(\abs f\wedge k)d(m_n\wedge2^n\lambda)
	\le
\limsup_n\int\abs f dm_n
	<
\infty
$$
\end{proof}	

We also obtain the following form of the strong law, with 
\begin{equation}
\Prob_*(\lambda)
	=
\big\{\mu\in\Prob(\A):
\mu\ll\lambda\text{ and }\mu(A)=0\text{ if and only if  }\ \lambda(A)=0\big\}
\end{equation}

\begin{corollary}[Koml\'os]
\label{cor slln}
Let $\seqn f$ be a sequence of measurable functions such that
\begin{equation}
\label{L0 bdd}
\lim_{c\to\infty}\sup_{h\in\co(\abs{f_1},\abs{f_2},\ldots)}\lambda^*(h>c)=0
\end{equation}
There exists a sequence $\seqn g$ in $\G f$ and $\mu\in\Prob_*(\lambda)$
such that, for any subsequence $\seqn{g^\alpha}$, the partial sums
\begin{equation}
\label{sums}
S^\alpha_k=\frac{g^\alpha_1+\ldots+g^\alpha_k}{k}
\qquad
k=1,2,\ldots
\end{equation}
form a Cauchy sequence in $L^1(\mu)$.
\end{corollary}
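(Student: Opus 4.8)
The plan is to replace the possibly non-integrable data $\seqn f$ by a relatively weakly compact sequence of set functions through an equivalent change of measure, after which Koml\'os' theorem in the form already proved --- which, as the first remark following Corollary \ref{cor komlos sign} observes, reduces in the relatively weakly compact case to the Banach--Saks theorem --- applies, and one concludes by Ces\`aro averaging.

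First I would use \eqref{L0 bdd}, which says exactly that the convex set $\co(\abs{f_1},\abs{f_2},\ldots)$ is bounded in $\lambda$-probability, to manufacture $\mu\in\Prob_*(\lambda)$ relative to which $\{\abs{f_n}:n\in\N\}$ --- and hence also its convex hull $\co(\abs{f_1},\abs{f_2},\ldots)$ --- is uniformly integrable, equivalently relatively weakly compact, in $L^1(\mu)$. The construction I have in mind picks $c_n$ with $\lambda^*(\abs{f_n}>c_n)<2^{-n}$, encloses $\{\abs{f_n}>c_n\}$ in $B_n\in\A$ with $\lambda(B_n)<2^{-n}$, and takes $\mu$ with a bounded, $\lambda$-a.e.\ strictly positive density with respect to $\lambda$ that on each $B_n$ is pushed below a level rendering $\int_{B_n}\abs{f_n}\,d\mu$ small and summable, while being left constant off $\bigcup_nB_n$; then $\sup_n\int\abs{f_n}\,d\mu<\infty$ and $\sup_n\int_{\{\abs{f_n}>c\}}\abs{f_n}\,d\mu\to0$ as $c\to\infty$. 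I expect this to be the \emph{main obstacle}: when $\lambda$ is countably additive it is the familiar fact that a convex, bounded-in-probability subset of $L^0_+$ becomes uniformly integrable under an equivalent change of measure (the a.e.\ positivity of the density being secured by Borel--Cantelli), but for a merely finitely additive $\lambda$ on an algebra the countable operations in that argument must be carried out carefully, using only the $\lambda$-measurability of the $f_n$.

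With such a $\mu$ fixed, set $F_n(A)=\int_A f_n\,d\mu$, so that $\seqn F$ --- and, since lattice operations preserve both properties, also $\seqn{F^+}$ and $\seqn{F^-}$ together with any sequence of their convex combinations --- is norm bounded and relatively weakly compact in $ba(\A)$. Applying Corollary \ref{cor komlos sign} to $\seqn F$ produces $\seqn G$ in $\G F$, say $G_n(A)=\int_A g_n\,d\mu$ with $g_n\in\co(f_n,f_{n+1},\ldots)$; moreover, since the proof of that corollary invokes Theorem \ref{th komlos} only on relatively weakly compact sequences (namely $\seqn{F^+}$ and a convex-combination sequence built from $\seqn{F^-}$), the strong convergence granted in that case by the first remark after Corollary \ref{cor komlos sign} --- via \cite[Theorems 2.3 and 4.1]{brooks dinculeanu} --- dispenses with the restricting sets and yields $G_n\to\xi$ in total variation for some $\xi\in ba(\A)$. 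As $g\mapsto\int_{\cdot}g\,d\mu$ is an isometry of $L^1(\mu)$ onto a closed subspace of $ba(\A)$ containing every $G_n$, necessarily $\xi=\int_{\cdot}\xi'\,d\mu$ with $\xi'\in L^1(\mu)$ and $\norm{g_n-\xi'}_{L^1(\mu)}\to0$. Finally, any subsequence $\seqn{g^\alpha}$ of $\seqn g$ again satisfies $g^\alpha_j\to\xi'$ in $L^1(\mu)$, so $\norm{S^\alpha_k-\xi'}_{L^1(\mu)}\le k^{-1}\sum_{j=1}^{k}\norm{g^\alpha_j-\xi'}_{L^1(\mu)}\to0$; in particular $S^\alpha_1,S^\alpha_2,\ldots$ is Cauchy in $L^1(\mu)$, and $\mu\in\Prob_*(\lambda)$ by construction, which is the assertion.
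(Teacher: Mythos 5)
Your reduction fails at the very first step, and the failure is not a technicality of finite additivity: the ``familiar fact'' you invoke --- that a convex, bounded-in-probability subset of $L^0_+$ becomes uniformly integrable under an equivalent change of measure --- is false already for Lebesgue measure. Take $\Omega=[0,1]$ with $\lambda$ Lebesgue measure, let $A_{n,k}$ ($k=1,\ldots,2^n$) be the dyadic intervals of generation $n$, and enumerate $f_{n,k}=2^n\set{A_{n,k}}$ as a single sequence. Every $h$ in the convex hull satisfies $\int h\,d\lambda=1$, so $\lambda(h>c)\le 1/c$ and \eqref{L0 bdd} holds. Yet if $d\mu=\rho\,d\lambda$ with $\rho>0$ a.e.\ and $\int\rho\,d\lambda=1$, then for each $n$ the numbers $2^n\int_{A_{n,k}}\rho\,d\lambda$, $k=1,\ldots,2^n$, average exactly to $1$, so $\max_k\int_{\{f_{n,k}>c\}}f_{n,k}\,d\mu=\max_k 2^n\int_{A_{n,k}}\rho\,d\lambda\ge 1$ whenever $c<2^n$: no equivalent $\mu$ renders the sequence uniformly integrable. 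What \eqref{L0 bdd} actually buys --- and what \cite[Theorem 6.1]{BK} delivers in the finitely additive setting --- is only $L^1(\nu)$-\emph{boundedness} of the convex hull for some $\nu\in\Prob_*(\lambda)$, strictly weaker than uniform integrability. Your sketched construction reflects the same gap: pushing the density down on $B_n\supset\{\abs{f_n}>c_n\}$ controls $\int_{\{\abs{f_n}>c\}}\abs{f_n}\,d\mu$ only for $c\ge c_n$ and gives nothing uniform over the indices $n$ with $c_n\gg c$.

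Consequently the entire second half of your argument --- relative weak compactness of $\seqn F$ in $ba(\A)$, norm convergence of $\seqn G$ via the Banach--Saks remark, and $L^1(\mu)$-convergence of $g_n$ itself --- is unavailable; indeed, it would prove far more than the corollary asserts. The paper instead settles for the weaker output of Corollary \ref{cor komlos sign} combined with Remark (2): the forward convex combinations $g_n$ are merely $\nu$-\emph{Cauchy}, i.e.\ Cauchy in measure. It then performs a \emph{second} change of measure, again via \cite[Theorem 6.1]{BK}, producing $\mu\in\Prob_*(\nu)\subset\Prob_*(\lambda)$ and a subsequence along which the increments $\abs{g_{n+1}-g_n}$ are summable in $L^1(\mu)$ in the sense of \eqref{absolute sum}; only then does the Ces\`aro averaging --- a triviality in your scheme --- do real work, converting summability of increments into the Cauchy property of the partial sums $S^\alpha_k$ for every subsequence. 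To salvage your outline you would have to replace ``uniformly integrable'' by ``bounded in $L^1$'' throughout and accept that $g_n$ converges only in measure, which lands you essentially on the paper's proof.
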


\begin{proof}
By  \cite[Theorem 6.1]{BK} it is possible to find $\nu\in\Prob_*(\lambda)$
such that the set $\co(\abs{f_1},\abs{f_2},\ldots)$ is bounded in $L^1(\nu)$.
We can then apply Corollary \ref{cor komlos sign} and the remarks that follow 
and obtain a sequence $\seqn g$ in $\G f$ which is $\nu$-Cauchy. By that
same reference it is possible to find $\mu\in\Prob_*(\nu)\subset\Prob_*(\lambda)$ 
and a subsequence (still denoted by $\seqn g$ for simplicity) such that 
$\co(\abs{f_1},\abs{f_2},\ldots)$ is bounded in $L^1(\mu)$ and that
\begin{equation}
\label{absolute sum}
\sup_p\int\sum_{n=n_r}^{n_r+p}\abs{g_{n+1}-g_n}d\mu
\le
2^{-r}
\qquad
r\in\N
\end{equation}
for some suitably chosen sequence $\seq nr$. Let $k>n_r$ and 
$S_k=k^{-1}\sum_{n=1}^kg_n$. Then,
\begin{align*}
k\int\dabs{S_k-g_{n_r}}d\mu
	&\le
\int\sum_{n=1}^{n_r}\abs{g_n-g_{n_r}}d\mu
	+
\int\sum_{n=n_r+1}^k\dabs{g_n-g_{n_r}}d\mu\\
	&\le
\int\sum_{n=1}^{n_r}\abs{g_n-g_{n_r}}d\mu
	+
\int\sum_{n=n_r+1}^k\sum_{i=n_r+1}^n\dabs{g_i-g_{i-1}}d\mu\\
	&\le
2n_r\sup_n\norm{f_n}_{L^1(\mu)}+2^{-r}k
\end{align*}
so that
\begin{equation*}
\sup_{p,q}\norm{S_{k+p}-S_{k+q}}_{L^1(\mu)}
\le
4(n_r/k)\sup_n\norm{f_n}_{L^1(\mu)}+2^{-(r-1)}
\end{equation*}
and the sequence $\seq Sk$ is Cauchy in $L^1(\mu)$. It is clear that 
\eqref{absolute sum}, on which our preceding conclusion rests, holds 
for the sequence $\seqn g$ if it holds for all of its subsequences.
\end{proof}

The comparison of Corollary \ref{cor slln} with the original result of Koml\'os
illustrates the difficulties inherent in finite additivity. Not only is the original
property of a.s. convergence replaced here by the Cauchy criterion, but also 
a change of measure is necessary to prove the claim. Given that $\lambda$ 
and $\mu$ have the same null sets, these limitations are irrelevant in the case 
of a countably additive measure. It should be noted, however, that the change 
of measure technique is useful here to relax more familiar integrability conditions 
which are traditionally employed in the proof of the strong law.

In closing, one should mention that a finitely additive version of the strong law 
has been proved long ago, by Chen \cite{chen} and later by Halevy and  
Bhaskara Rao \cite{halevy rao} who also proved a version of Koml\'os theorem. 
Other important papers that follow a similar approach to finitely additive limit 
theorems are those of Karandikar \cite{karandikar} and of Ramakrishnan 
\cite{ramakrishnan}. The setting adopted in these and related papers is however 
that of independent strategies mentioned above and is therefore radically different 
from ours. The connection between this approach and the one proposed in this 
work surely deserves further study.


\begin{thebibliography}{9}
\bibitem{aliprantis burkinshaw}
C. D. Aliprantis, O. Burkinshaw:
\textit{Positive Operators},
Academic Press, Orlando 1985.

\bibitem{balder 89}
E. J. Balder: 
\textit{Infinite-Dimensional Extension of a Theorem of Koml\'{o}s},
Probab. Th. Rel. Fields \textbf{81} (1989), 185-188.

\bibitem{balder}
E. J. Balder: 
\textit{New Sequential Compactness Results for Spaces of Scalarly Integrable Functions}, 
J. Math. Anal. Appl. \textbf{151} (1990), 1-16.

\bibitem{balder hess}
E. J. Balder, C. Hess: 
\textit{Two Generalizations of Koml\'os' Theorem with Lower Closure-Type Applications}, 
J. Convex Anal. \textbf{3} (1996), 25-44.

\bibitem{berkes}
I. Berkes: 
\textit{An Extension of the Koml\'{o}s Subsequence Theorem},
Acta Math. Hung. \textbf{55} (1990), 103-110.

\bibitem{blackwell dubins}
D. Blackwell, L. E. Dubins:
\textit{Merging of Opinions with Increasing Information},
Ann. Math. Stat. \textbf{33} (1962), 882-886.

\bibitem{brooks}
J. K. Brooks: \textit{Weak Compactness in the Space of Vector Measures},
Bull. Amer. Math. Soc. \textbf{78} (1972), 284-287.

\bibitem{brooks dinculeanu}
J. K. Brooks, N. Dinculeanu: 
\textit{Strong Additivity, Absolute Continuity and Compactness in Spaces of Measures}, 
J. Math. Anal. Appl. \textbf{45} (1974), 156-175.

\bibitem{burkholder}
D. L. Burkholder: 
\textit{Discussion on prof. Kingman's Paper}, 
Ann. Probab. \textbf 1 (1973), 900-902.

\bibitem{BK} 
G. Cassese: 
\textit{Convergence in Measure under Finite Additivity}, 
Shanky\= a A, \textbf{75} (2013), 171-193.

\bibitem{JOTP} 
G. Cassese: 
\textit{Finitely Additive Supermartingales}, 
J. Theo. Probab., \textbf{21} (2008), 586-603.

\bibitem{chatterji}
S. D. Chatterji: 
\textit{A General Strong Law}, 
Inventiones Math. \textbf{9} (1970), 235-245.

\bibitem{chen}
R. Chen:
\textit{Some Finitely Additive Versions of the Strong Law of Large Numbers},
Israel J. Math. \textbf{24} (1976), 244-259.

\bibitem{chen as}
R. Chen:
\textit{On Almost Sure Convergence in a Finitely Additive Setting},
Z. Wahrsch. Ver. Geb. \textbf{37} (1977), 341-365.

\bibitem{cvitanic karatzas}
J. Cvitani\' c, I. Karatzas: 
\textit{Generalized Neyman-Pearson Lemma via Convex Duality}, 
Bernoulli \textbf 7 (2001), 79-97.

\bibitem{day lennard}
J. B. Day, C. Lennard:
\textit{Convex Koml\'os Sets in Banach Function Spaces}, 
J. Math. Anal. Appl. \textbf{367} (2010), 129-136.

\bibitem{diestel uhl}
J. Diestel, J. J. Uhl Jr.: 
\textit{Vector Measures}, 
Mathematical Surveys 
\textbf{15}, Amer. Math. Soc., Providence, 1977.

\bibitem{doleans}
C. Dol\'eans-Dade:
\textit{Existence du Processus Croissant Naturel Associ\'e
\`a un Potentiel de la Classe (D)},
Z. Wahrsch. verw. Geb. \textbf{9} (1968), 309-314.

\bibitem{dubins savage}
L. E. Dubins, L. J. Savage:
\textit{How to Gamble if You Must. Inequalities for Stochastic Processes},
Dover, New York, 2014.

\bibitem{bible} 
N. Dunford, J. T. Schwartz: 
\textit{Linear Operators. General Theory}, 
Wiley, New York, 1988.

\bibitem{guessous}
M. Guessous: 
\textit{An Elementary Proof of Koml\'{o}s-R\'ev\'esz Theorem in Hilbert Space}, 
J. Convex Anal. \textbf{4} (1997), 321-332.

\bibitem{halevy rao}
A. Halevy, M. Bhaskara Rao: 
\textit{On an Analogous of Koml\'{o}s's Theorem for Strategies}, 
Ann. Probab. \textbf{7} (1979), 1073-1077.

\bibitem{jimenez}
E. Jim\'enez Fern\'andez, M. A. Juan, E. A. S\'anchez P\'erez:
\textit{A Koml\'os Theorem for Abstract Banach Lattices of Measurable Functions}, 
J. Math. Anal. Appl. \textbf{383} (2011), 130-136.

\bibitem{karandikar}
R. L. Karandikar:
\textit{A general principle for limits theorems in finitely additive probability},
Trans. Amer. Math. Soc. \textbf{273} (1982), 541-550.

\bibitem{komlos} 
J. Koml\'{o}s: 
\textit{A Generalization of a Problem of Steinhaus},
Acta Math. Hung. \textbf{18} (1967), 217-229.

\bibitem{lennard}
C. Lennard: 
\textit{A Converse to a Theorem of Koml\'os for Convex Subsets of $L_1$}, 
Pacific J. Math. \textbf{159} (1993), 75-85.

\bibitem{purves sudderth}
R. A. Purves, W. D. Sudderth: 
\textit{Some Finitely Additive Probability},
Ann. Probab. \textbf 4 (1976), 259-276.

\bibitem{ramakrishnan}
S. Ramakrishnan:
\textit{Finitely Additive Markov Chains},
Trans. Amer. Math. Soc. \textbf{265} (1981), 247-272.

\bibitem{schwartz}
M. Schwartz: 
\textit{New Proofs of a Theorem of Koml\'{o}s},
Acta Math. Hung. \textbf{47} (1986), 181-185.

\bibitem{trautner}
R. Trautner: 
\textit{A New Proof of the Koml\'os-R\'ev\'esz Theorem},
Probab. Th. Rel. Fields \textbf{84} (1990), 281-287.

\bibitem{weizsacker}
H. von Weizs\"acker: 
\textit{Can One Drop $L^1$-Boundedness in Koml\'{o}s's 
Subsequence Theorem?}, Amer. Math. Month. \textbf{111} (2004), 900-903.	
\end{thebibliography}
\end{document}